%%%%%%%%%%%%%%%%%%%%%%%%%%%%%%%%%%%
%%%%%%%%%%%%%%%%%%%%%%%%%%%%%%%%%%%
%  Version by Amit 27.1.23
%%%%%%%%%%%%%%%%%%%%%%%%%%%%%%%%%%%
%%%%%%%%%%%%%%%%%%%%%%%%%%%%%%%%%%%

\documentclass[11pt]{amsart}
\usepackage[latin1]{inputenc}
\usepackage{amsmath}
\usepackage{amsfonts}
\usepackage{amssymb}
\usepackage{graphicx}
\usepackage{fourier}
\usepackage{mathrsfs}
\usepackage{hyperref}
\usepackage{enumerate}
\usepackage{esint}
\usepackage{bm}
\usepackage{xcolor} % only for highlighting comment
\usepackage{verbatim}        % for comments 
\usepackage{bbm}

\newtheorem{theorem}{Theorem}[section]

\theoremstyle{definition}
\newtheorem{definition}[theorem]{Definition}

\theoremstyle{remark}
\newtheorem{remark}[theorem]{Remark}

\newcommand{\inner}[1]{\left\langle#1\right\rangle}
\newcommand{\norm}[1]{\left\lVert#1\right\rVert}
\newcommand{\abs}[1]{\left\lvert#1\right\rvert}
\newcommand{\pa}[1]{\left( #1 \right)}
\newcommand{\rpa}[1]{\left[ #1 \right]}
\newcommand{\br}[1]{\left\lbrace #1\right\rbrace}
\newcommand{\R}{\mathbb{R}}

\newcommand{\N}{\mathbb{N}}
\newcommand{\sph}{\mathbb{S}}

\numberwithin{equation}{section}

\def\1{\mathbbm{1}}
\newcommand{\kac}{\mathbb{S}^{N-1}\pa{\sqrt{N}}}
\newcommand{\LN}{\mathcal{L}_{N}}

\newcommand{\HH}{\mathcal{H}}
\newcommand{\MM}{\mathcal{M}}
\newcommand{\EE}{\mathcal{E}}
\newcommand{\DD}{\mathcal{D}}
\newcommand{\dsn}{d\sigma_N}
\newcommand{\ZZ}{\mathcal{Z}}
\newcommand{\ZN}{\mathcal{Z}_N\pa{f,\sqrt{N}}}

%\definecolor{bostonuniversityred}{rgb}{0.8, 0.0, 0.0}
\newcommand{\amit}{\textcolor{black}} 
%\definecolor{byzantium}{rgb}{0.44, 0.16, 0.39}
%\newcommand{\calc}{\textcolor{byzantium}}

%%%%%%%%%%%%%%%%%%%%%%%%%%%%%%%%%%
\DeclareFontFamily{T1}{calligra}{}
\DeclareFontShape{T1}{calligra}{m}{n}{<->s*[2]callig15}{}
\DeclareMathAlphabet\mathcalligra   {T1}{calligra} {m} {n}
%\newcommand{\ell}{\mathcalligra{l}}

%\newcommand{\cc}{\mathcalligra{c}}
%\newcommand{\po}{\mathcalligra{p}\mathcalligra{o}}
%%%%%%%%%%%%%%%%%%%%%%%%%%%%%%%%%%

\title{The Entropic journey of Kac's Model}
\author{Amit Einav}
%\date{\today}

\address{Durham University, School of Mathematical Sciences, Upper Mountjoy Campus, Stockton Road, DH1 3LE, Durham, United Kingdom}
\email{amit.einav@durham.ac.uk}

\begin{document}

\dedicatory{In loving memory of Maria Concei\c{c}\~{a}o Carvalho (S\~{a}o), who is always with us in our memories and our hearts. }

\begin{abstract}
The goal of this paper is to review the advances that were made during the last few decades in the study of the entropy, and in particular the entropy method, for Kac's many particle system.  

\end{abstract}

%{\tiny
%	{KEYWORDS.}
%	Fokker-Planck equation, large time behaviour, defective equation, Fisher Information}
%\\{\tiny{MSC.}
%	35Q84 (Fokker-Planck equations), 35Q82 (PDEs in connection with statistical mechanics), 35H10 (Hypoelliptic equations), 35K10 (second order parabolic equations), 35B40 (Asymptotic behavior of solutions to PDEs)}
%\subsection{ToDos}
%\begin{itemize}
%	\item Add Remark concerning rigorous differentiation of integrals
%	\item upper/lower contractivity
%	\item Reformatting of equations?
%\end{itemize}	
	\maketitle
	
	\tableofcontents

\section{Introduction}\label{sec:intro}

Our fascination with a mathematical description for real life phenomena can be viewed as a cornerstone in the birth of the mathematical subject known as \textit{(modern) Analysis} back in the 17th century. Our desire to understand such phenomena has not lessened over the course of the centuries - if anything, our expanding familiarity with the universe and its mysteries only deepened it. Particular interest, especially in the last century, is given to phenomena which involve many elements. 

Systems that revolve around many particles, substances, animals or agents are all around us - from the air we breath to the galaxies around us, herds of buffaloes roaming the African plains and red blood cells coursing through our bodies. Even our attempts to navigate the crowd in a busy shopping centre is another example of a system of many elements. As common as such systems are, their mathematical investigation is far from simple.

The goal of our paper is to give a short overview on how we mathematically treat such systems and to focus our attention on one particular important model: Kac's many particle model. Kac's model, introduced in 1956 by Mark Kac (see \cite{Kac1956}), was conceived to give a probabilistic justification to the famous Boltzmann equation\footnote{as opposed to the rigorous derivation of it from Newtonian mechanics, which is known as Hilbert's 6th problem.} and is one of the first examples for the \textit{mean field limit} procedure. 

Our paper will almost solely concern itself with the \textit{functional} study of this model and its convergence to equilibrium via the so-called \textit{entropy method}\footnote{we will be remiss if we won't mention that there are many other studies on the convergence to equilibrium, such as the impressive \cite{MM2013}.}. Many contribution to this study, as well as to the Boltzmann equation which is intimately connected to Kac's model, have been made over the last 70 years. For the sake of brevity we will only mention works that are connected to the topic we present here, and even that will be mostly limited to those papers that are directly related to the results we will present. 

Further information about the history and study of Kac's model and the Boltzmann equation can be found in \cite{CCL2011review,MM2013,V2002review} and references within. 

\subsection{Many elements systems - the Microscopic, Macroscopic and Mesoscopic framework}\label{subsec:micro_macro_meso}
When one considers systems of many elements there are usually three frameworks one can adopt: \textit{microscopic, macroscopic and mesoscopic}.
\subsubsection*{Microscopic approach} This approach is, in a sense, the oldest and the most accurate approach. In the microscopic framework we view each element in the system as an individual and consider, and attempt to solve, the phase space trajectorial equations associated to these elements. In the common case where the interaction between the elements of the system is binary, i.e. happens only between two elements, a typical microscopic description of a system is given by  
\begin{equation} \label{eq:micro}
	\begin{split}
		&\frac{d}{dt}x_i(t)=v_i(t),\\
		&\frac{d}{dt}v_i(t)=\frac{1}{N}\sum_{j\not=i}K\pa{x_i(t)-x_j(t)},		
	\end{split}		
\end{equation}

where the function $K$ represents the force of the interaction between the $i-$th and $j-$th elements and is assumed to depend only on the relative distance between them. The factor of $1/N$ appearing next to the sum of the second equation (corresponding to Newton's second law) makes sure that the force acting on each element is of order of magnitude $1$.  

While extremely accurate in theory, solving a system such as \eqref{eq:micro} is not really feasible in most cases - even when we have theorems that assure us that there exists a solution. Our significant computational power is not enough to solve even relatively simple systems of this form. However, in most cases we don't really need to.

\subsubsection*{Macroscopic approach} Faced with the seemingly herculean task of trying to solve microscopic systems of equations, people have realised that for all intent and purposes we do \textit{not} need to understand the individual behaviour of each element of the system in order to understand its behaviour. Indeed, as most early systems that were investigated revolved around particles of gases or fluids, systems that comprise of elements that are not perceptible to the human eye, it was understood that the behaviour we are looking for is more \textit{statistical}, or \textit{probabilistic}, in nature. The core idea of the macroscopic approach is to consider a part of a system as a ``unit'' whose evolution we would like to investigate. This part of the system should be sufficiently large to include enough elements to facilitate probabilistic considerations, yet sufficiently small with respect to the entire system to be considered as ``point mass''. Macroscopic approaches are the realm of \textit{Fluid Dynamics} and typical equations in it include the Euler equation
\begin{equation}\label{eq:euler}
	\begin{aligned}
		&\frac{\partial u}{\partial t}+\pa{u\cdot\nabla}u=-\frac{1}{\rho}\nabla p +F,
	\end{aligned}
\end{equation}
and the Navier-Stokes equation\footnote{equation \eqref{eq:navier_stokes} represents the Navier-Stokes equation without external force.}
\begin{equation}\label{eq:navier_stokes}
	\begin{aligned}
		&\frac{\partial u}{\partial t}+\pa{u\cdot\nabla}u=-\frac{1}{\rho}\nabla p +\nu \Delta u,
	\end{aligned}
\end{equation}
where $u$ is the velocity field of the fluid, $\rho$ is the density of it, $p$ is the pressure, $F$ is the external force, and $\nu$ is the viscosity of the fluid. 

\subsubsection*{Mesoscopic approach} The mesoscopic approach appeared around the end of the 19th century in the mathematical study of the kinetic theory of gasses. It was spearheaded by figures such as Maxwell and Boltzmann. This approach is an ``in between'' approach between the microscopic and macroscopic. The object of investigation of mesoscopic equations is an \textit{average element} of the system. Consequently, mesoscopic equations usually consider the evolution of probability density functions. 

An extremely important (and one of the first) mesoscopic equation is the so-called Boltzmann equation, given by
\begin{equation}\label{eq:boltzmann}
	\partial_t f(t,x,v)+v\cdot \nabla_x f(t,x,v)=Q_B(f,f)(t,x,v),
\end{equation} 
with
\begin{equation}\nonumber
	\begin{split}
		Q_B&(f,f)(x,v)=\int_{\R^d \times \mathbb{S}^{d-1}}B\pa{v,v_\ast, \sigma}\\
		&\pa{f\pa{x,v^\prime}f\pa{x,v_\ast^\prime}-f\pa{x,v}f\pa{x,v_\ast}}dv_\ast d\sigma,
	\end{split}
\end{equation}
where 
\begin{equation}		\label{eq:scattering}
	\begin{split}
		&v^\prime=\frac{v+v_\ast}{2}+\frac{\abs{v-v_\ast}\sigma}{2},\\
		&v_\ast^\prime=\frac{v+v_\ast}{2}-\frac{\abs{v-v_\ast}\sigma}{2},
	\end{split}
\end{equation}
and $B\pa{v,v_\ast, \sigma}$ being the collision kernel (more on which shortly).

The Boltzmann equation describes the evolution of an average particle in dilute (or rarefied) gas with no external forces which is governed by two processes:
\begin{enumerate}[1. ]
	\item The left hand side of \eqref{eq:boltzmann}, known as the \textit{transport part} of the equation, represents the free motion of the average particle prior to its collision with another particle. It is based on the simple transport equation
	\begin{equation}\nonumber
		\begin{aligned}
			&	\partial_t f(t,x,v)+v\cdot \nabla_x f(t,x,v)=0,\\
			&f\pa{0,x,v}=f_0(x,v),		
		\end{aligned}
	\end{equation}
	whose solution 
	$$f(t,x,v)=f_0\pa{x-vt,v}$$
	is constant on trajectories of paths of constant velocities. The transport part is very common in many physically relevant mesoscopic equations. 
	\item The right hand side of \eqref{eq:boltzmann}, known as the \textit{collision or interaction} part, represents the effect of the interactions, collisions in the case of gases, on the evolution of the probability density. In the case of the Boltzmann equation, and many other mesoscopic equations, these collisions happen when two (average) particles reach the same spatial position. How will this affect our ``possibility'' to find a particle at time $t$ in this position $x$ and velocity $v$? We need to consider two scenarios:
	\begin{itemize}
		\item[-] The particles arrived at velocities $v^\prime$ and $v_\ast^\prime$ to the position $x$ and collided in a way that preserves the energy and momentum. Consequently, they have scattered in accordance to a unit vector $\sigma$ and ended up with velocities $v$ and $v_\ast$, thus \textit{increasing} the possibility to find an average particle at position $x$ and velocity $v$. The connection between $v^\prime$, $v_{\ast}^\prime$, $v$, $v_\ast$, and the scattering angle $\sigma$ is given by \eqref{eq:scattering}\footnote{one can show that this relation is the only viable one if one assumes conservation of energy and momentum.} and this process is encoded in the \textit{gain} term of $Q_B$
		$$\int_{\R^d\times \sph^{d-1}}B\pa{v,v_\ast, \sigma} f\pa{x,v^\prime}f\pa{x,v_\ast^\prime}dv_\ast d\sigma.$$
		The function $B\pa{v,v_\ast, \sigma}$, the \textit{collision kernel}, gives us the intensity of such collision and depends only on the physics of the problem. The integration over $\sigma$ and $v_\ast$ is present to take into account all possible scattering and resulting additional (less relevant) velocity. There is, however, some subtle but fundamental assumption here: the collision process we've described speaks about two (average) particles being at the same position $x$ with velocities $v^\prime$ and $v_\ast^\prime$ respectively. This information is encoded in the function $f_2\pa{x,x,v^\prime,v_{\ast}^\prime}$ where $f_2$ is the \textit{joint probability density} of these average particles. The gain term, however, contains the expression $f\pa{x,v^\prime}f\pa{x,v_\ast^\prime}$ instead. This implies an assumption of \textit{independence} between these particles prior to the collision. This is known as Boltzmann's \textit{molecular chaos} or \textit{Stosszahlansatz}.
		\item[-] The same consideration as above, together with Boltzmann's molecular chaos assumption, gives us the \textit{loss} term in $Q_B$ 
		$$-\int_{\R^d\times \sph^{d-1}}B\pa{v,v_\ast, \sigma} f\pa{x,v}f\pa{x,v_\ast}dv_\ast d\sigma$$
		which expresses the \textit{decrease} in the possibility to find an average particle at position $x$ and velocity $v$ and corresponds to the process that the two particles arrived to the point $x$ at velocities $v$ and $v_\ast$ and their resulting post collision velocities do not include (up to a set of measure zero) the velocity $v$.
	\end{itemize}
\end{enumerate}
It is worth to mention that in most situations the collision kernel $B$ is given by
\begin{equation}\label{eq:collision_kernel}
	B\pa{v,v_\ast,\sigma}=\abs{v-v_\ast}^\gamma b\pa{\cos\theta}.
\end{equation}
with $\gamma$, known as the \textit{hardness of the potential}, being in  $[-d,1]$.

Nowadays we consider many equations to be ``kinetic equations'', even when they do not have a clear mesoscopic interpretation, and use common tools to investigate them. Examples include Fokker-Planck equations, coagulation and fragmentation equations, and reversible and irreversible reaction-diffusion equations. 

\subsection{Moving between the Microscopic, Macroscopic, and Mesoscopic frameworks}\label{subsec:hydro_and_mean_field}
It is evident from the description in \S\ref{subsec:micro_macro_meso} that the different frameworks one has for equations and models that pertain to many element phenomena come in hierarchical order. It shouldn't come as a big surprise then that we can try and ``move'' between these approaches. The process of inferring an equation in one framework from another framework requires some limiting procedure. The two common procedures we employ are expressed in the following diagram:
\vspace*{0.1cm}
$$\text{Microscopic}\;\;\xrightarrow[\text{Mean Field Limits}]{}\;\;\text{Mesoscopic}\;\;\xrightarrow[\text{Hydrodynamical Limits}]{}\;\;\text{Macroscopic}$$
\vspace*{0.1cm}
We will mostly focus on mean field limits, but would mention hydrodynamical limits for the sake of completion. 
\subsubsection*{From meso to macro - Hydrodynamical limits}  
The mesoscopic framework ``lives'' in the microscopic world of individual elements yet concerns itself only with average ``individual'' behaviours. These behaviours, we imagine, are enough for us to understand how the macroscopic ``unit'' evolves. In order for us to be able to achieve this goal, however, we must scale our spatial and time variables since these ``units'' of the macroscopic world contain a large quantity of mesoscopic elements, and the time scale where phenomena occur in these levels is not of the same order of magnitude.

This scaling is usually done by introducing a new ``smallness'' parameter, pertaining to the ``zooming out'' in space and time and the impact of the scaling on the strength of interactions between the elements. One can use such approach, which known as \textit{hydrodynamical limits}, to find connection between fluid equations, such as the Euler equation, and the Boltzmann equation. As this topic is outside the remit of this review paper, we will not elaborate more on it.

\subsubsection*{From micro to meso - Mean field limits}  
It shouldn't come as a shock that we should expect to be able to get information on the behaviour of an average element of a system, i.e. on the mesoscopic framework, from the complete deterministic behaviour of every element in it, i.e. the microscopic framework. 

The first step one must undertake to achieve such a goal is to recast the trajectorial microscopic equations, expressed in \eqref{eq:micro}, in a more probabilistic way. Let us, thus, consider a system of $N$ elements, each of which lives in the phase space $\mathcal{X}\times \mathcal{V}$. In order to consider the problem from a probabilistic perspective we need to explore the \textit{probability density function} of the entire ensemble, which we will denote by $F_N\pa{\bm{X}_N,\bm{V}_N}$ with $\pa{\bm{X}_N,\bm{V}_N}\in \pa{\mathcal{X}\times \mathcal{V}}^N$. As $F_N$ remains constants on the trajectories of the individual elements of the system we find that the system of equations \eqref{eq:micro} is equivalent, at least formally, to the following equation
\begin{equation}\label{eq:master}
	\begin{split}
		\partial_t F_N\pa{\bm{X}_N,\bm{V}_N}&+\bm{V}_N \cdot \nabla_{\bm{X}_N}F_N\pa{\bm{X}_N,\bm{V}_N} \\
		&+ \frac{1}{N}\sum_{i=1}^N\sum_{i\not=j}K\pa{x_i-x_j} \cdot \nabla_{v_i} F_N\pa{\bm{X}_N,\bm{V}_N}=0
	\end{split}
\end{equation}
which is known as the \textit{master equation} or the \textit{Liouville equation}. 

As the mesoscopic approach concerns itself with only one average element, we need to reduce \eqref{eq:master} to an equation that relates to such element. Before we do so we notice that in order for the above to make sense, and indeed in order for us to even be able to consider a probabilistic approach here, we shouldn't be able to distinguish between the elements of the system. Functionally, what this means is that the probability density function must be \textit{symmetric} under permutation of its phase-space variables, i.e. for any $\sigma$ the group of permutation of $\br{1,\dots, N}$, we must have that 
$$F_N\pa{x_1,\dots, x_N,v_1,\dots,v_N} = F_N\pa{x_{\sigma\pa{1}},\dots, x_{\sigma\pa{N}},v_{\sigma\pa{1}},\dots, v_{\sigma\pa{N}}}.$$
The evolution of one average element of the system, therefore, is determined by an equation for the \textit{first marginal} of $F_N$ which we will denote by $F_{N,1}$. Since $F_N$ is symmetric, it doesn't matter which variable we consider. To find the evolution of this marginal we need to integrate out the remaining variables in \eqref{eq:master}. This, however, will not yield a closed equation since the term that involves the function $K\pa{x_i-x_j}$ causes correlations that can't be integrated out. Indeed, using our symmetry assumption we find that the evolution of $F_{N,1}$ is given by
\begin{equation}\label{eq:first_marginal_with_second}
	\begin{aligned}
		&\partial_t F_{N,1}\pa{x_1,v_1}+v_1 \cdot \nabla_{x_1}F_{N,1}\pa{x_1,v_1} \\
		&+ \frac{N-1}{N}\int_{\mathcal{X} \times \mathcal{V}}K\pa{x_1-x_2} \cdot \nabla_{v_1} F_{N,2}\pa{x_1,x_2,v_1,v_2}dx_2dv_2=0.
	\end{aligned}
\end{equation}
An attempt to find the evolution of the second marginal, $F_{N,2}$, in order to close the above equation will result in an equation that will depend (via an integration just like above) on $F_{N,3}$. In general one can form a hierarchy of $N$ equations, known as the \textit{BBGKY}\footnote{Bogoliubov-Born-Green-Kirkwood-Yvon.} hierarchy, which, up to the $N-$th level, details the evolution of $F_{N,k}$ with respect to $F_{N,k+1}$.

As can be seen from the above, we can't expect to be able to close our hierarchy as it stands. At this point there are two crucial observation we would like to make:
\begin{enumerate}[1)]
	\item In order for us to be able to justify considering an average element of the system we must have an enormous amount of elements in it. Mathematically speaking, this means that we would like to consider the limit $N\to\infty$. 
	\item Most system that we consider in our investigations do not evolve ``randomly'' (for lack of a better word) and we expect some phenomenon to emergence, a sort of \textit{asymptotic correlation} between the elements that becomes more and more pronounced as the number of elements in the system increases. One example, which came from the study of dilute gases, is that since in such gases particles hardly see each other, one could expect that any \textit{fixed} group of $k$ particles will become more and more \textit{independent} as the number of gas particle increases. In other words, 
	\begin{equation}\label{eq:intuitive_chaos}
		\begin{split}
			&F_{N,1}(x_1,v_1)\underset{N\text{ large}}{\approx} f(x_1,v_1),\\
			&F_{N,2}(x_1,x_2,v_1,v_2)\underset{N\text{ large}}{\approx} f(x_1,v_1)f(x_2,v_2), \\
			&\vdots\\
			&F_{N,k}(\bm{X}_k,\bm{V}_k)\underset{N\text{ large}}{\approx} f^{\otimes k}\pa{\bm{X}_k,\bm{V}_k}.
		\end{split}
	\end{equation}
\end{enumerate}
The condition expressed in \eqref{eq:intuitive_chaos}, which we will get back to in the next section, is known as \textit{chaos} or \textit{chaoticity}. Assuming that this condition holds under the evolution of the master equation we find that taking the limit of the number of elements to infinity in \eqref{eq:first_marginal_with_second} gives the equation
\begin{equation}\nonumber
	\begin{aligned}
		\partial_t f\pa{x,v}+v \cdot \nabla_{x}f(x,v) 
		+  \pa{K \ast \rho} (x)  \nabla_v f(x,v)=0,
	\end{aligned}
\end{equation}
where $\rho(x)=\int_{\mathcal{V}}f(x,v)dv$. The above represents the equation of a \textit{limiting} average element.

The process of using asymptotic correlation on a marginal equation to attain a limit equation is known as the \textit{mean field limit} procedure. The solution to the limiting equation is known as the \textit{mean field limit}.
There are a few things we should emphasise at this point:
\begin{itemize}
	\item The true starting point of the mean field limit procedure was not the microscopic framework and the set of equations \eqref{eq:micro} - it was the master equation \eqref{eq:master}. This observation allows us to start our mean field study \textit{not} from a deterministic trajectorial system but from so-called average model, expressed via a master equation for a probability density of an ensemble of elements (which is usually governed by a Poisson jump processes). This line of investigation and the use of mean field limit approaches is an extremely active field and includes models that range from dilute gases to swarming of animals and decision making. 
	\item Asymptotic correlations are \textit{functional properties} that involve no evolution. As such, to properly apply a mean field limit approach one \textit{must} show that the appropriate asymptotic correlation \textit{propagates} with the evolution of the master equation.
	\item To this day there is only \textit{one} asymptotic correlation we use in our mean field approach - chaoticity. The notion of chaos in the form presented here was conceived by Kac in his 1956 work on his particle model (see \cite{Kac1956}).  
	
\end{itemize}
We have now covered the background and framework for the rest of our work - the study of Kac's original particle model.

\section{Kac's model and the study of the convergence to equilibrium}\label{sec:kac_model}

\subsection{Kac's model}\label{subsec:kac_intro} Showing the validity of the Boltzmann equation from Newtonian mechanics is an old problem which was formally articulated in Hilbert's famous symposium at 1900. It is commonly known as \textit{Hilbert's 6th problem}. While some progress has been made, most notable by Lanford in 1975 (for some history and a  modern take on Landord's ideas see \cite{GSRT2013}), the problem remains open. In his 1956 work, \cite{Kac1956}, Kac introduced an average model (in the sense presented in the end of the previous section) to try and give a probabilistic justification to the Boltzmann equation. 

Kac's model considered $N$ gas particles which interact via a binary collision process arriving in a Poisson stream. The jump process associated to these collisions replaces the trajectorial role of the spatial variable in the original equation and consequently Kac's model is spatially homogenous. To simplify things further, Kac assumed that each particle has a one dimensional velocity. The price of this simplification is the fact that unlike the Boltzmann equation, the collisional process in Kac's model does not preserve both energy and momentum\footnote{Kac's model can be (and have been) extended to a higher dimensional model where this has been rectified (see \cite{McKean1967})}.

The process that governs the evolution of Kac's model is the following: when the Poisson clock chimes, two particles who are chosen uniformly from the ensemble, say the $i-$the and $j-$th particle, collide and scatter with a random scattering angle, $\theta$, which is distributed uniformly on $[0,2\pi]$. The resulting new velocities of the particles are given by
\begin{equation}\label{eq:collision_velocitis}
	\begin{split}
		&v_i\pa{\theta}=v_{i}\cos\pa{\theta}+v_{j}\sin\pa{\theta},\\
		&v_j\pa{\theta}=-v_{i}\sin\pa{\theta}+v_{j}\cos\pa{\theta}.
	\end{split}
\end{equation}
Kac's evolution, i.e. his master equation, is constructed in a similar way to Boltzmann equation and includes a gain and loss terms. It is given by
\begin{equation}\label{eq:master_kac}
	\partial_t F_N \pa{t,\bm{V}_N}=\mathcal{L}_N F_N = N(\mathcal{Q}-I)F_N\left(t,\bm{V}_N \right), \quad \bm{V}_N\in \mathbb{S}^{N-1}\pa{\sqrt{N}}
\end{equation}
where
$$\mathcal{Q}F\pa{\bm{V}_N}=\frac{1}{\left(\begin{tabular}{ c}
		$N$ \\
		$2$
	\end{tabular}\right)}\sum_{i<j}\frac{1}{2\pi} \int_{0}^{2\pi}F_N\left(R_{i,j,\theta}\pa{\bm{V}_N}\right)d\theta,$$
with 
\begin{equation}\nonumber
	\pa{R_{i,j,\theta}\pa{\bm{V}_N}}_{l}=\begin{cases}
		v_l & l\not=i,j,\\
		v_i\pa{\theta} & l=i,\\
		v_j\pa{\theta} & l=j.
	\end{cases}
\end{equation}

A couple of remarks:
\begin{itemize}
	\item The factor $N$, appearing before the gain and loss term (represented by $\mathcal{Q}$ and $I$), is there to make sure that the average time between two collisions is of order of magnitude $1$ and \textit{is independent in $N$}. This is crucial if we want to achieve a meaningful limit when $N$ goes to infinity.
	\item The phase space in Kac's model is  $\kac$ and \textit{not} $\R^N$. The reason behind this is that the collision process described above conserves the energy of the ensemble (but not the momentum). As the particles in this model are indistinguishable we know that if one of them has kinetic energy $E$ then all of them must have the same energy. Consequently, the total energy of the ensemble is $NE$. Choosing $E$ to be $1/2$ under the assumption that the mass of all particles is $1$, gives us that $\bm{V}_N\in \kac$. The fact that we are restricting ourselves to a sphere will play an important role, and add some difficulties, in the study of the long time behaviour of the solutions to our master equation. In this work we will sometimes refer to $\kac$ as \textit{Kac's sphere}. 
\end{itemize}
\subsection*{The connection between Kac's model and the Boltzmann equation}\label{subsec:kac_MFL} Following on the ideas presented in \S\ref{subsec:hydro_and_mean_field}, one can find the first equation in the BBGKY hierarchy for Kac's master equation and attempt to perform the mean field limit procedure. A straight forward calculation (which is a bit more complicated since we are working on Kac's sphere) shows that the equation for $F_{N,1}$ is given by
\begin{equation}\nonumber
	\partial_t F_{N,1}(v) =\frac{1}{\pi} \int_0^{2\pi}\int_{\mathbb{R}} \left(F_{N,2}(v(\theta),(w(\theta))-F_{N,2}(v,w)\right)d\vartheta dw
\end{equation}
where $v\pa{\theta}$ and $w\pa{\theta}$ are given by the same formula as $v_i\pa{\theta}$ and $v_j\pa{\theta}$ in \eqref{eq:collision_velocitis}. In order to proceed with the mean filed limit approach on the above we require two things: an asymptotic correlation and a proof of its propagation.

The notion of \textit{chaos}, mentioned in \S\ref{subsec:kac_MFL}, was conceived by Kac for this purpose and was presented for the first time in the same paper, \cite{Kac1956}. The formal definition of this notion, extended to general probability measures, is given by:
\begin{definition}\label{def:chaos}
	Let $\mathcal{X}$ be a Polish space. We say that a sequence of symmetric probability measures, $\br{\mu_N}_{N\in\N}\in \mathcal{P}\pa{\mathcal{X}^N}$, is $\mu_0-$chaotic for some probability measure $\mu_0\in \mathcal{P}\pa{\mathcal{X}}$ if
	$$\Pi_k\mu_{N} \underset{N\to\infty}{\overset{\text{weak}}{\longrightarrow}}\mu_0^{\otimes k}$$
	where $\Pi_k \mu_N$ is the $k-$the marginal of $\mu_N$.\footnote{this condition can be reformulated using empirical measures. $\br{\mu_N}_{N\in\N}$ will be $\mu_0-$chaotic if and only if for any random vector $\pa{X_1,\dots,X_N}$ with law $\mu_N$ we have that the random \textit{empirical measure}
		$$\mu_N = \frac{1}{N}\sum_{i=1}^N \delta_{X_i}$$
		converges in law towards the deterministic measure $\mu_0$. } 
	
	In the case of Kac's model we say that a sequence of symmetric probability density functions on $\kac$, $\br{F_N}_{N\in\N}$, is $f-$chaotic, where $f$ is a probability density on $\R$, if $F_N d\sigma_N$ is $f dx-$chaotic, where $d\sigma_N$ is the uniform probability measure on $\kac$.
\end{definition}
Following on his definition of chaoticity, Kac has shown that this notion is propagated by his master equation and concluded his model's mean field limit equation
\begin{equation}\label{eq:boltzmann_kac}
	\partial_t f(v) = \frac{1}{\pi}\int \left(f(v(\theta))f(w(\theta))-f(v_1)f(v_2) \right)d\theta dw,
\end{equation}
which is known as the \textit{Boltzmann-Kac equation}. When comparing the above to \eqref{eq:boltzmann}, one can view it as a $1-$dimensional version of the Boltzmann equation for Maxwell molecules, i.e. for the case where $\gamma=0$ and $b$ is a constant in \eqref{eq:collision_kernel}. With that, Kac has managed to give a probibalistic justification to the validity of the Boltzmann equation. 

%It is important to emphasise at this point that Kac's model is the first model where the notion of chaos was defined. As such it is, in many senses, the birth place of the mean field limit approach for average models. To this day, chaoticity is the only asymptotic correlation notion we use in investigating such models. 

\subsection{Convergence to equilibrium - the spectral gap}\label{subsec:spectral_gap}
At the time of conception of Kac's model, the study of non-linear equations was not as developed as it is today. Besides giving a validation to Boltzmann equation (and establishing the mean field limit approach with his notion of chaoticity), Kac was interested in trying to use his limiting procedure to ``push down'' information from his many particle model, which is governed by a simple yet dimensional dependent operator, to its limit equation. In particular, a question that was, and is to this day, of great interest to the community was that of the rate of convergence to equilibrium for this equation. 

A closer look at the operator that governs Kac's master equation, $\mathcal{L}_N$, reveals the following:
\begin{itemize}
	\item \sloppy $\LN$ is composed of $2-$dimensional rotations and averaging. As such it is a bounded linear operator from $L^p\pa{\kac,d\sigma_N}$ to  $L^p\pa{\kac,d\sigma_N}$ for all $p\in [1,\infty]$. 
	\item Continuing on the above, a simple calculation shows that $\LN$ is also self-adjoint on $L^2\pa{\kac, d\sigma_N}$.
	\item \sloppy $\LN F =0$ if and only if $F$ is a radial function. As our underlying space is a sphere we conclude that a normalised basis to the kernel of $\LN$ in $\pa{L^2\pa{\kac},d\sigma_N}$, 
	and the only steady state to the equation with a unit mass, is $F_N\equiv 1$. The fact that we have a unique normalised steady state is another reason why one needs to restrict the ensemble to a sphere in Kac's model.
\end{itemize}

The fact that $\LN$ is self-adjoint on $L^2\pa{\kac,d\sigma_N}$ and has a one dimensional kernel implies that the convergence to the equilibrium in Kac's model, which is the probability density $F_N\equiv1$, is determined by the \textit{spectral gap} of the operator $\LN$:
$$\Delta_N=\inf_{F_N\perp 1,\; \norm{F_N}_{L^2\pa{\kac, d\sigma_N}}=1} \inner{F_N,\LN F_N}.$$
Kac has realised that in order to be able to utilise this information for his mean field limit he needed to find a lower bound on $\Delta_N$ that is \textit{uniform in $N$}. He conjectured in his work that 
$$\Delta=\inf_{N\in\N} \Delta_N >0.$$
It took 44 years to resolve this conjecture. The first proof was given by Janvresse in \cite{J2001} and later, in their work \cite{CCL2003},  Carlen, Carvalho and Loss showed that
\begin{equation}\label{eq:spectral_gap}
	\Delta_N=\frac{N+2}{2\pa{N-1}}.
\end{equation}
\amit{It is worth the note that the study of spectral gaps in many particle models, and general Markov processes that relate to the notion of chaoticity, remains a fascinating and not relevant topic. We would like to mention in particular the recent work of Carlen, Carvalho and Loss \cite{CCL2020}.}

While \amit{equality \eqref{eq:spectral_gap}} seems very encouraging, it was known long before the conjecture was resolved that the spectral gap is not the way to go forward. The reason behind this is the fact that the distance that is associated to the spectral gap, the $L^2$ norm, does not adhere to the notion of chaoticity. 

Intuitively speaking, if $\br{F_N}_{N\in\N}$ is $f-$chaotic then one could imagine that $F_N\approx f^{\otimes N}$. This is quite untrue as it neglects correlation that appear, but it epitomises the underlying problem of the $L^2$ distance as the $L^2$ norm of a tensorisation (at least on a tensorised space) becomes the \textit{product} of the $L^2$ norms of the ``generator'' of this tensorisation. On Kac's sphere one can find a sequence $\br{F_N}_{N\in\N}$ of probability densities such that 
$$\norm{F_N}_{L^2\pa{\kac,d\sigma_N}} \geq C^N$$
for some $C>1$. Combining the above with  \eqref{eq:spectral_gap} yields the sharp estimate
$$\norm{F_N(t)-1}_{L^2\pa{\kac,d\sigma_N}} \leq e^{-\frac{\pa{N+2} t}{2(N-1)}}\norm{F_N(0)-1}_{L^2\pa{\kac,d\sigma_N}}$$
showing the problem we face: as the right hand side of the above dominates an expression of the form $C^N e^{-\frac{t}{2}}$ we see that we won't be able to take $N$ to infinity and get something meaningful. Intuitively, the above shows that the time we need to wait before seeing significant decay of our solution is of order $N$, i.e. the number of particles. 

\subsection{Convergence to equilibrium - the entropy}\label{subsec:entropy}
The failure of the spectral gap method due to its underlying distance, as described above, made people realise that we need to consider a different type of distance with which we measure how far a solution is from equilibrium. It didn't take very long to realise that perhaps we should take a leaf from Boltzmann's book and consider the (non-linear) ``distance'' that is \textit{the entropy}. Modelled after the Boltzmann relative entropy\footnote{we call is a \textit{relative} entropy as it measures a function relative to another, in our case the equilibrium. There are many possibilities for entropies and we refer the reader to \cite{AMTU2001} where they can find some common examples used in other kinetic equations.}, given by
\begin{equation}\nonumber
	\mathcal{H}(f|f_\infty) = \int_{\R}h\pa{f(x)|f_\infty(x)}dx
\end{equation}
where $h\pa{x|y}=x\log \pa{\frac{x}{y}} -x +y$ and $f_\infty$ is the equilibrium of the equation, Kac's entropy is given by
\begin{equation}
	\mathcal{H}_N(F_N)=\int_{\kac}F_N \log F_N d\sigma_N.
\end{equation}
An indication that we have indeed chosen the right ``distance'' comes from its adherence to the notion of chaoticity, at least intuitively. Assuming that $F_N\approx f^{\otimes N}$ one can show that
$$\HH_N\pa{F_N} \approx \int_{\kac}f^{\otimes N}\pa{\bm{V}_N}\log\pa{f^{\otimes N}\pa{\bm{V}_N}}d\sigma_N = \sum_{i=1}^N\int_{\kac}f^{\otimes N}\pa{\bm{V}_N}\log\pa{f\pa{v_i}}d\sigma_N $$
$$\underset{\text{symmetry}}{=}N\int_{\kac}f^{\otimes N}\pa{\bm{V}_N}\log\pa{f\pa{v_1}}d\sigma_N\approx N\HH\pa{f|\mathcal{M}}$$
with $\MM(v)=\pa{2\pi}^{-1/2}e^{-v^2/2}$ appearing due to the integration of $\prod_{j\not=1}f(v_j)$ against $d\sigma_N$. The above tells us, at least informally, that if $\HH_N\pa{F_N(t)}$ converges exponentially to zero with a rate that is independent of $N$, then a simple rescaling by $N$ of the functional should give us an exponential convergence to equilibrium for the functional $\HH\pa{f(t)|\MM}$ in the limit equation. \\
The next natural question is, thus, \textit{how can we estimate quantitatively the convergence of $\HH_N$ to zero?}

The method we'll use to explore this question is known as \textit{the entropy method}. Let us motivate the ideas of this method by going back to spectral gaps.\\
Consider the equation
%Why do we look at the spectral gap a semi-definite negative operator $L$ whose kernel if one dimensional when we investigate the long time behaviour of the equations
$$\partial_t u = Lu, $$
where $L$ is a semi-definite negative operator whose kernel is one dimensional. Let $u_{\infty}$ be a normalised basis for this kernel which corresponds to the unique equilibrium of the system.
The evolution of the natural distance, the $L^2$ norm, between the solution to the above,  $u(t)$, and the equilibrium $u_\infty$ is given by 
$$\frac{d}{dt}\norm{u(t)-u_\infty}^2 = 2\inner{u(t)-u_\infty, L\pa{u(t)-u_\infty}}.$$
The right hand side of the above expression brings to mind the fact that (in certain settings) we can find the eigenvalues of a self-adjoint operator $L$ by looking at $\inner{u,Lu}$ with $\norm{u}=1$. The spectral gap of $L$, $\lambda$, can be shown to equal
$$\lambda = \inf_{f\perp u_\infty,\;\norm{f}=1}\inner{f,-Lf}$$
and consequently we see that as long as $\inner{u(t),u_\infty}=1$ for $t\geq 0$, which is guaranteed if and only if $\inner{u(0),u_\infty}=1$\footnote{ $$\frac{d}{dt}\inner{u(t),u_\infty}=\inner{Lu(t),u_\infty}=\inner{u(t),Lu_\infty}=0.$$
	This is the conservation of mass property for the equation.}, we have that\footnote{we require this condition since only then do we have that $$u(t)-u_\infty = u(t)-\inner{u(t),u_\infty}u_\infty\in \br{u_\infty}^{\perp}.$$}
$$\frac{d}{dt}\norm{u(t)-u_\infty}^2 = 2\inner{u(t)-u_\infty, L\pa{u(t)-u_\infty}} \leq -2\lambda \norm{u(t)-u_\infty}^2.$$
This differential inequality results in an exponential convergence to equilibrium with explicit rate which is the spectral gap 
$$\norm{u(t)-u_\infty} \leq e^{-\lambda t}\norm{u(0)-u_\infty}.$$
At this point we notice that we managed to achieve a concrete rate of convergence by finding a \textit{functional inequality} (in our case based on a spectral property) that connected between the dissipation of our proposed distance and the distance itself. This is the key idea of the entropy method - we just need to perform it on the entropy.

Given an entropy for our system (or more precisely - a relative entropy), i.e. a Lyapnov functional $\mathcal{E}\pa{f|f_\infty}$\footnote{i.e. a functional such that $\frac{d}{dt}\mathcal{E}\pa{f(t)|f_\infty}\leq 0$ for any solution, $f(t)$, of the equation.} such that $\mathcal{E}\pa{f|f_\infty}=0$ if and only if $f=f_\infty$, the entropy method proceeds as follows:
\begin{itemize}
	\item Differentiating the entropy along the flow of the evolution equation we find that its dissipation can be written as $-\mathcal{D}\pa{f(t)}$, where the new \textit{non-negative} functional $\DD$ is known as the \textit{entropy production}.
	\item Putting the evolution of our system aside\footnote{in a sense we hope that our entropy and its production have truly captured the geometry of the problem and the relevant properties of the evolution.} we search for an explicit function $\Phi:\R_+\to \R_+$ such that 
	\begin{equation}\label{eq:entropy_method_functional_inequality}
		\DD\pa{f} \geq \Phi\pa{\EE\pa{f|f_\infty}}
	\end{equation}
	for all appropriate functions.
	\item Once the above functional inequality has been found, we recall that $\DD$ is connected to the dissipation of $\EE$ and conclude the following differential inequality
	$$\frac{d}{dt}\EE\pa{f(t)|f_\infty} \leq - \Phi\pa{\EE\pa{f(t)|f_\infty}}$$
	from which we conclude a \textit{quantitative expression} of the convergence to zero of $\EE\pa{f(t)|f_\infty}$.
\end{itemize}
The most desirable situation in the application of the entropy method, and in a sense the most physical one, is when $\Phi$ in $\eqref{eq:entropy_method_functional_inequality}$ is given by $\Phi(x)=\lambda x$ for some $\lambda>0$. We can think about this case as an ``entropic'' spectral gap as it implies the functional inequality 
$$\DD(f)\geq \lambda \EE\pa{f|f_\infty}$$
which results in an exponential convergence to equilibrium.

Applying the entropy method to our entropy $\HH_N$ under the flow of Kac's master equation \eqref{eq:master} we find that the entropy production in our case is given by
$$\DD\pa{F_N}=-\inner{\log F_N, \LN F_N}_{L^2\pa{\kac,d\sigma_N}}.$$
In order to achieve our desired exponential convergence of $\HH_N\pa{F_N}$\footnote{it is worth to note that $\HH_N$ is indeed a \textit{relative} functional. It measures the entropic distance of $F_N\dsn$ from $1\dsn$.} we consider the entropic spectral gap
\begin{equation}\nonumber
	\Gamma_N =\inf_{F_N\dsn\in \mathcal{P}\pa{\kac}}\frac{\mathcal{D}_N(F_N)}{\HH_N(F_N)}
\end{equation}
%where we have used the notation $F_N\in \mathcal{P}\pa{\kac}$ to indicate that $F_N\dsn \in \mathcal{P}\pa{\kac}$.
and, much like Kac's spectral gap, ask whether or not 
$$\Gamma=\inf_{N}\Gamma_N >0.$$
In his 2003 paper, \cite{V2003}, Villani has shown that\footnote{the formula presented here was achieved by some simplification of the argument of \cite{V2003} in the case of the Kac model we present here (see \cite{CCLLV2010} for more details).} 
$$\Gamma_N \geq \frac{2}{N-1}$$
and conjectured that it is of an order of magnitude $1/N$. If true, since by definition %and the use of the entropy method we have that
$$\HH_N\pa{F_N(t)} \leq e^{-\Gamma_N t}\HH_N\pa{F_N(0)},$$
Villani's conjecture would imply that in order to see significant decay in the entropy on Kac's sphere we must wait a time that is proportional to $N$ - exactly like our spectral gap instance. 

As we will see, the problem we're facing in this setting, unlike the spectral gap, is not the underlying distance - it is the production itself. 

An important step in the investigation of Villani's conjecture was made in the 2010 work of Carlen, Carvahlo, Le-Roux, Loss and Villlani, \cite{CCLLV2010} where the authors have managed to show that 
$$\Gamma=0.$$
Expanding on the ideas presented in that paper, Villani's conjecture has been shown to be essentially true in the work of the author \cite{Einav2011Kac} where he proved the following:
\begin{theorem}\label{thm:villani_conjecture}
	For any $0<\eta<1$ there exists an explicit constant $C_\eta$, that blows up as $\eta$ goes to $1$, such that
	\begin{equation}\nonumber
		\Gamma_N \leq \frac{C_\eta}{N^{\eta}}.
	\end{equation}
\end{theorem}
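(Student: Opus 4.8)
The plan is to produce, for each $\eta\in(0,1)$ and each $N$, a single trial probability density $F_N=F_N^{(\eta)}$ on $\kac$ for which
\[
\frac{\DD_N(F_N)}{\HH_N(F_N)}\le\frac{C_\eta}{N^{\eta}},
\]
with $C_\eta$ explicit and blowing up as $\eta\uparrow1$; since $\Gamma_N$ is the infimum of this ratio over all densities on $\kac$, that is all that is needed. The point of departure is the family of trial functions built by Carlen, Carvalho, Le Roux, Loss and Villani in \cite{CCLLV2010} to show $\Gamma=0$: densities that favour a rare fluctuation in which an atypically large share of the total energy $N$ is concentrated on a small number of particles. What remains to be done is the quantitative refinement --- choosing the parameters of such a family as explicit functions of $N$ and $\eta$, and tracking how every error term depends on them.

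Concretely, I would take $F_N$ to be (a symmetrised, conditioned version of) a product
\[
F_N(\bm{V}_N)=\frac{1}{\ZZ_N}\prod_{i=1}^{N}\varphi_N(v_i),\qquad\bm{V}_N\in\kac,
\]
where the one-particle profile $\varphi_N$ is a Maxwellian carrying a small, sharply localised deformation --- schematically $\varphi_N=(1-\varepsilon_N)\MM_{\beta_N}+\varepsilon_N\psi_N$, with the mass $\varepsilon_N$, the location, and the width of the bump $\psi_N$ chosen in terms of $N$ and $\eta$, and the temperature $\beta_N$ fixed by $\int_{\R}v^2\varphi_N(v)\,dv=1$ so that the conditioning onto $\kac$ is non-degenerate. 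One arranges $\varphi_N\to\MM$ slowly as $N\to\infty$; the closer $\eta$ is to $1$, the more extreme the bump must be, which is exactly the mechanism that makes $C_\eta$ blow up.

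Then I would prove the two matching estimates. For the entropy from below, an equivalence-of-ensembles (local central limit theorem) estimate for the marginals of $\dsn$ gives $\HH_N(F_N)=N\,\HH(\varphi_N|\MM)+(\text{lower order})$, after which $\HH(\varphi_N|\MM)$ is bounded below in terms of the bump parameters; here the non-Gaussian behaviour of the one-particle marginal of $\dsn$ near the edge $v=\pm\sqrt N$ is what governs the term and must be handled carefully when the bump sits deep in the tail. For the entropy production from above, I would write $\DD_N(F_N)$ as a sum over the $\binom N2$ pairs of per-collision contributions, use that a collision only alters the two colliding coordinates, and reduce the estimate to a two-variable quantity: the relative entropy between $\varphi_N\otimes\varphi_N$ and its average over a random Kac rotation $R_\theta$. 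The deformation $\psi_N$ is engineered so that a single Kac rotation spreads it only mildly, making this per-pair production smaller than $\HH(\varphi_N|\MM)$ by a factor of order $N^{-\eta}$; summing the $\binom N2$ contributions and accounting for the prefactor $N$ in $\LN$ then gives $\DD_N(F_N)\le C_\eta N^{-\eta}\HH_N(F_N)$.

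The hard part is not any single step but the \emph{simultaneous} realisation of the last two: one must exhibit a single profile $\varphi_N$ whose relative entropy to the Maxwellian exceeds, by a factor $N^{\eta}$, its own smoothing rate under one random Kac collision. Profiles for which the rotational averaging is weak are necessarily concentrated or singular, and those are precisely the profiles for which the passage from $\R^N$ to the sphere $\kac$ --- the conditioning and moment corrections entering the entropy lower bound --- is most delicate. Balancing this trade-off, and determining the exact rate at which $C_\eta$ must degenerate as $\eta\uparrow1$, is the crux of the argument.
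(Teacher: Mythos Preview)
Your strategy---take the trial state to be a conditioned tensorisation of an $N$-dependent one-particle profile and control $\HH_N$ and $\DD_N$ through a local CLT for the normalisation function---is exactly the paper's. The paper's concrete profile is $f_{\delta_N}=\delta_N\,\MM_{1/(2\delta_N)}+(1-\delta_N)\,\MM_{1/(2(1-\delta_N))}$ with $\delta_N$ a negative power of $N$: a mixture of two Maxwellians each carrying half the unit energy, so the rare component is a very \emph{wide} Gaussian rather than a localised bump in the tail.

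Where your diagnosis of the difficulty departs from the paper is worth flagging. Once one has the explicit formulas \eqref{eq:entropy_of_conditioned} and \eqref{eq:production_of_conditioned}, both $\HH_N$ and $\DD_N$ for a conditioned tensorisation are governed by the \emph{same} object $\ZZ_n(f_{\delta_N},\cdot)$, and the ratio $\DD_N/\HH_N$ falls out of a direct computation---\emph{provided} one can estimate $\ZZ_n$. That is the real obstacle: the standard local CLT (Theorem~\ref{thm:Z_n_approximation_original}) requires a fixed generator with bounded fourth moment, while here $f_{\delta_N}$ depends on $N$ and $\int_{\R} v^4 f_{\delta_N}\,dv = 3/\bigl(4\delta_N(1-\delta_N)\bigr)\to\infty$. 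The substantive content of \cite{Einav2011Kac} is a new quantitative local CLT (Theorem~\ref{thm:approximation_of_ZZ_N_N_dependent}) that tolerates $N$-dependent generators with diverging fourth moment; its hypotheses on $\delta_N$ are precisely what fix the admissible exponent $\eta$ and force $C_\eta\to\infty$ as $\eta\uparrow 1$. The edge behaviour of $\dsn$ that you single out is not the bottleneck, and there is no separate ``balancing'' step between an entropy lower bound and a production upper bound---both come from the same asymptotic for $\ZZ_n$.
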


The intuition behind the proof of these results can be explained relatively simply: How can we create an ensemble that takes a very long to reach equilibrium? Consider an system in which most of the particles are fairly stable but there is a small amount of particles that is extremely energetic - carrying half the energy of the \textit{entire} system, for instance. If the ensemble is chaotic, i.e. ``almost independent'', it will take a very long time for these energetic particles to meet the slow moving particles and transfer some of their energy so that the entire system can equilibrate. 

Showing this precisely relies heavily on understanding chaoticity on Kac's sphere and on constructing a very natural type of chaotic state - the so-called \textit{conditioned tensorisation}.  

\section{Conditioned tensorisation - the notion of ``almost independence''}

\subsection{Conditioned tensorisations}
Looking back at the description of the mean field limit process, our definition of chaos, and its propagation, one question which we have yet to address becomes apparent: Are there chaotic states at all?

Had our phase state been of the form $\mathcal{X}^N$, for some Polish space $\mathcal{X}$ (for instance $\R^N$) then chaoticity, which seem to mean ``almost independence'',\footnote{we do need to be quite careful here. The independence that chaos guarantees is always ``capped'' at a \textit{given} finite marginal. We have no idea what happens with the correlations between a number of elements that is of order of $N$.} could be attained by considering an independent state, i.e. a tensorisation of a given probability density $f$ on $\mathcal{X}$, $F_N=f^{\otimes N}$. This, however, is impossible on $\kac$ since the velocity variables can not be independent. Can we, though, find a chaotic state on Kac's sphere that looks as if it is almost a tensorised state?

One natural approach presents itself readily: Given a probability density function on $\R$, $f$, we can tensorise it to get an independent state on $\R^N$ and then \textit{restrict it to Kac's sphere} in hope that not too many correlations will manifest. In other words, given a probability density function $f$ on $\R$ we define
\begin{equation}\label{eq:conditioned_tensorisation}
	F_N\pa{\bm{V}_N} = \frac{f^{\otimes N}\pa{\bm{V}_N}}{\mathcal{Z}_N\pa{f,\sqrt{N}}},\quad \text{where}\quad\mathcal{Z}_N\pa{f,\sqrt{N}}= \int_{\kac}f^{\otimes N}\pa{\bm{V}_N}\dsn.
\end{equation}
$F_N$ of the above form is called the \textit{conditioned tensorisation of $f$} and the function $\ZZ_N$ is known as \textit{the normalisation function}.

A few immediate questions come to mind when looking at the definition of these states:
\begin{itemize}
	\item Are they well defined? Clearly if $f$ is in addition continuous then $\ZN$ is well defined and $F_N$ is indeed a probability density on $\kac$ with respect to $\dsn$. What happens, however, if $f$ is only measurable? $\ZN$ is an integration over a set of measure zero with respect to $dx$ and as such we require some sort of trace theorem. 
	\item Are there any restriction on $f$? From our set up of Kac's model, and in particular our choice of $\kac$ as the underlying space, we know that the random variable that is associated to $f$, say $\mathscr{V}$, is supposed to have a unit kinetic energy. This means that to be consistent we must have that
	$$\int_{\R}v^2 f(v)dv =1.$$
	Since the above implies that the independent random ensemble $\pa{\mathscr{V}_1,\dots,\mathscr{V}_N}$ (whose values are in $\R^N$) has a kinetic energy of value $N$ we have a chance, according to the law of large numbers, that the ensemble is concentrated enough on $\kac$ for its restriction to Kac's sphere to make sense. 
	\item Is $F_N$ indeed chaotic? It is possible to show (see \cite{CCLLV2010,Einav2011Kac} and \cite{CE2013Levi} for more information) that 
	\begin{equation}\label{eq:marginal_conditioned_tensoriation}
		\begin{split}
			F_{N,k}\pa{v_1,\dots,v_k} =& \frac{\abs{\sph^{N-k-1}}}{\abs{\sph^{N-1}}}\;\frac{1}{N^{\frac{N-2}{2}}}\pa{N-\sum_{i=1}^k v_i^2}_+^{\frac{N-k-2}{2}}\\
			& \frac{\ZZ_{N-k}\pa{f,\sqrt{\pa{N-\sum_{i=1}^k v_i^2}_+}}}{\ZN}f^{\otimes k}\pa{v_1,\dots,v_k}
		\end{split}
	\end{equation}
	with 
	\begin{equation}\label{eq:concentration_function}
		\ZZ_n\pa{f,r} = \int_{\sph ^{m-1}\pa{r}}f^{\otimes n}\pa{\bm{V}_n} d\sigma_{n,r}
	\end{equation}
	and where $d\sigma_{n,r}$ is the uniform probability measure on $\sph^{n-1}\pa{r}$. In particular $\dsn=d\sigma_{N,\sqrt{N}}$.\\
\end{itemize}

Conditioned tensorisation are not a new concept. In fact, Kac mentioned them in his work \cite{Kac1956}. However, the conditions that he needed to impose on $f$ in his discussion were very stringent. A better understanding of these states, on how one can consider the trace operation which we seem to employ by defining them, and on the question of chaoticity was achieved in the work of Carlen, Carvalho, Le Roux, Loss and Villani, \cite{CCLLV2010}, which included some beautiful probabilistic observations and ideas. 

To truly understand conditioned tensorisation we must understand the behaviour of the normalisation function $\ZZ_N\pa{f,r}$ for all $r\in \rpa{0,\sqrt{N}}$.

By its definition, we expect that $\ZZ_n\pa{f,r}$ will tell us, in some sense, how concentrated $f^{\otimes n}$ is on $\sph^{n-1}\pa{r}$ - up to some geometric factors. A more careful look, however, reveals that this concentration is intimately connected to the probability density of $\sum_{i=1}^n \mathscr{V}_i^2$ - the random variable of the kinetic energy of the ensemble.

Indeed, if $h$ is the probability density of the variable $\mathscr{V}^2$, attained from the probability density of $\mathscr{V}$, $f$, by
\begin{equation}\label{eq:h_and_f}
	h(r) = \frac{f\pa{\sqrt{r}}+f\pa{-\sqrt{r}}}{2r},\qquad \forall r>0
\end{equation}
we have that for any radial function on $\R^n$, $\phi(\bm{X}_n)=\varphi\pa{\abs{\bm{X}_n}}$
$$\int_{\R^n}\phi\pa{\bm{V}_n}f^{\otimes n }\pa{\bm{V}_n}d\bm{V}_n = \int_{r=0}^\infty \varphi(r) \abs{\sph^{n-1}}r^{n-1}\underbrace{\int_{\sph^{n-1}\pa{r}}f^{\otimes n }\pa{\bm{V}_n}d\sigma_{n,r}}_{=\ZZ_n\pa{f,r}}.$$
On the other hand
$$\int_{\R^n}\phi\pa{\bm{V}_n}f^{\otimes n }\pa{\bm{V}_n}d\bm{V}_n  = \int_{0}^{\infty} \varphi\pa{\sqrt{t}}h^{\otimes n}\pa{t}dr=\int_{0}^\infty \varphi(r) 2rh^{\otimes n}\pa{r^2}dr, $$
where we have used the independence of $\br{\mathscr{V}_i^2}_{i=1,\dots,n}$ which follows from that of $\br{\mathscr{V}_i}_{i=1,\dots,n}$.
From all the above we can conclude that
\begin{equation}\label{eq:Z_n_and_h}
	\ZZ_n\pa{f,\sqrt{r}} = \frac{2h^{\otimes n}\pa{r}}{r^{\frac{n-2}{2}}\abs{\sph^{n-1}}}.
\end{equation}
Formula \eqref{eq:Z_n_and_h} gives us a way to go forward in investigating $\ZZ_N$ - we would like to find a \textit{local Central Limit Theorem}\footnote{i.e. a central limit theorem for the probability density of the independent sum.} for the tensorisation of $h$. One version of such theorem, expressed via $f$, was shown in \cite{CCLLV2010}:
\begin{theorem}\label{thm:Z_n_approximation_original}
	Let $f$ be a probability density function on $\mathbb{R}$ such that $f\in L^p\pa{\mathbb{R}}$ for some $p>1$. Assume in addition that 
	$$\int_{\mathbb{R}} v^2 f(v)dv=1 \quad \text{and}\quad  \int_{\mathbb{R}}v^4 f(v)dv <\infty.$$ Then $\mathcal{Z}_N(f,\sqrt{r})$ is well defined for any $r\in \rpa{0,N}$, and
	\begin{equation}\label{eq:Z_n_approximation_original}
		\mathcal{Z}_N(f,\sqrt{r})=\frac{2}{\sqrt{N}\Sigma \left\lvert \mathbb{S}^{N-1}\right\rvert  r^{\frac{N-2}{2}}}\left( \frac{e^{-\frac{(r-N)^2}{2N\Sigma^2}}}{\sqrt{2\pi}}+\lambda_N(r) \right),
	\end{equation}
	with
	\begin{equation}\nonumber
		\sup_{r\in [0,N]} \abs{\lambda_N(r)}\underset{N\rightarrow\infty}{\longrightarrow}0,
	\end{equation}
	and where $\Sigma^2 = \int_{\mathbb{R}}v^4f(v)dv - 1$.
	% and $\sup_{r>0} \abs{\lambda_N(r)}\underset{N\rightarrow\infty}{\longrightarrow}0$.
\end{theorem}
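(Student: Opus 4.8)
\medskip
\noindent\emph{Proof strategy.}
The plan is to reduce the statement to a \emph{local central limit theorem} (local CLT) and to prove the latter by Fourier inversion. The key is identity \eqref{eq:Z_n_and_h}, which expresses the normalisation function entirely through the law $h$ of $\mathscr{V}^2$ (see \eqref{eq:h_and_f}): one has
\begin{equation}\nonumber
	\ZZ_N\pa{f,\sqrt r}=\frac{2\,h^{\ast N}(r)}{r^{\frac{N-2}{2}}\abs{\sph^{N-1}}},
\end{equation}
where $h^{\ast N}$ is the $N$-fold convolution of $h$, i.e.\ the density of $S_N:=\sum_{i=1}^N\mathscr{V}_i^2$. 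The hypotheses $\int v^2f=1$ and $\int v^4f<\infty$ say precisely that $\mathbb{E}\rpa{\mathscr{V}^2}=1$ and $\mathrm{Var}\pa{\mathscr{V}^2}=\int v^4f-1=\Sigma^2<\infty$, so $S_N$ has mean $N$ and variance $N\Sigma^2$. Writing $g_N$ for the density of $\pa{S_N-N}/\pa{\Sigma\sqrt N}$ one has $h^{\ast N}(r)=\tfrac1{\Sigma\sqrt N}\,g_N\!\pa{\tfrac{r-N}{\Sigma\sqrt N}}$, and substituting this into the target formula \eqref{eq:Z_n_approximation_original} shows that it holds with
\begin{equation}\nonumber
	\lambda_N(r)=g_N\!\pa{\tfrac{r-N}{\Sigma\sqrt N}}-\frac1{\sqrt{2\pi}}\exp\!\pa{-\tfrac{(r-N)^2}{2N\Sigma^2}},
\end{equation}
so that $\sup_{r\in[0,N]}\abs{\lambda_N(r)}\le\sup_{x\in\R}\abs{g_N(x)-\tfrac1{\sqrt{2\pi}}e^{-x^2/2}}$. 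Thus everything reduces to the \emph{uniform local CLT} $\sup_{x\in\R}\abs{g_N(x)-\tfrac1{\sqrt{2\pi}}e^{-x^2/2}}\to0$ for the i.i.d.\ summands $\mathscr{V}_i^2$.

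Next I would extract from $f\in L^p$, $p>1$, the regularity that makes both the statement and the Fourier argument meaningful. After reducing to $1<p\le2$ (possible since $f\in L^1$ as well), a direct estimate — substituting $r=s^2$ and using H\"older against $f\in L^p$ for the part near the origin, and $s^{q-1}\ge1$ for the part near infinity — shows $h\in L^q\pa{\R_+}$ for every $q$ in the nonempty interval $\pa{1,\tfrac{2p}{p+1}}\subset(1,2)$. Hausdorff--Young then gives that the characteristic function $\psi(t):=\mathbb{E}\rpa{e^{it\mathscr{V}^2}}=\int_{\R}e^{itv^2}f(v)\,dv=\widehat h(t)$ lies in $L^{q'}\pa{\R}$ with $q'=\tfrac{q}{q-1}<\infty$; since $\abs{\psi}\le1$ everywhere, $\psi\in L^{m_0}$ for every integer $m_0\ge q'$, and for $N\ge m_0$ we get $\psi^N\in L^1$. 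In particular $h^{\ast N}$ has a bounded continuous version, which is exactly what gives $\ZZ_N\pa{f,\sqrt r}$ a pointwise meaning for $r>0$, and Fourier inversion becomes licit.

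For the local CLT itself I would estimate, using inversion for both densities,
\begin{equation}\nonumber
	\sup_{x\in\R}\abs{g_N(x)-\tfrac1{\sqrt{2\pi}}e^{-x^2/2}}\le\frac1{2\pi}\int_{\R}\abs{\widehat{g_N}(t)-e^{-t^2/2}}\,dt,\qquad \widehat{g_N}(t)=e^{-it\sqrt N/\Sigma}\,\psi\!\pa{\tfrac{t}{\Sigma\sqrt N}}^{N},
\end{equation}
and split $\R$ into three frequency bands. On \emph{low frequencies} $\abs t\le\delta\sqrt N$, the two- and four-moment conditions give $\psi(s)=1+is-\tfrac{1+\Sigma^2}{2}s^2+o(s^2)$, hence $\log\psi(s)=is-\tfrac{\Sigma^2}{2}s^2+o(s^2)$ (the linear term being cancelled by the centring $e^{-it\sqrt N/\Sigma}$), so $\widehat{g_N}(t)\to e^{-t^2/2}$ pointwise; a quadratic upper bound $\abs{\psi(s)}\le e^{-cs^2}$ valid near $0$ dominates the integrand by an $N$-independent $L^1$ function and dominated convergence applies. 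On \emph{high frequencies} $\abs t\ge M\sqrt N$, Riemann--Lebesgue ($h\in L^1$) gives $\rho_M:=\sup_{\abs s\ge M/\Sigma}\abs{\psi(s)}\to0$ as $M\to\infty$, and for $N\ge m_0$ the change of variables $s=t/(\Sigma\sqrt N)$ yields
\begin{equation}\nonumber
	\int_{\abs t\ge M\sqrt N}\abs{\widehat{g_N}(t)}\,dt=\Sigma\sqrt N\int_{\abs s\ge M/\Sigma}\abs{\psi(s)}^{N}\,ds\le\Sigma\sqrt N\,\rho_M^{\,N-m_0}\,\norm{\psi}_{L^{m_0}}^{m_0}\longrightarrow 0 .
\end{equation}
On \emph{intermediate frequencies} $\delta\sqrt N\le\abs t\le M\sqrt N$: $\mathscr{V}^2$ is absolutely continuous, hence non-lattice, so $\abs{\psi(s)}<1$ for all $s\ne0$ and $\sup_{\delta/\Sigma\le\abs s\le M/\Sigma}\abs{\psi(s)}=:1-\eta<1$; the same change of variables bounds this band by $C\sqrt N(1-\eta)^N\to0$. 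Adding the three estimates gives the uniform local CLT, hence the theorem.

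The step I expect to be the genuine obstacle is the control off the origin (the high and intermediate bands): after rescaling the frequency by $\Sigma\sqrt N$ the inversion integral carries a Jacobian of size $\Sigma\sqrt N$, so the pointwise decay $\psi^N\to0$ is not by itself enough — one needs honest \emph{uniform} decay of $\abs\psi$ outside every neighbourhood of $0$ at a geometric-in-$N$ rate, and this is precisely where $f\in L^p$ (via Hausdorff--Young, to place $\psi$ in some $L^{m_0}$) and Riemann--Lebesgue must be combined, with the automatic non-lattice property closing the middle band. The low-frequency analysis, by contrast, is the classical CLT Taylor computation.
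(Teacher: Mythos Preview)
Your proposal is correct and follows exactly the route the paper sketches: the paper itself does not prove this theorem but attributes it to \cite{CCLLV2010}, after reducing it (via \eqref{eq:Z_n_and_h}) to a local CLT for $h^{\ast N}$, which is precisely your starting point. Your Fourier-inversion argument with the three frequency bands, together with the extraction of $h\in L^q$ for $q\in\bigl(1,\tfrac{2p}{p+1}\bigr)$ from $f\in L^p$ to justify inversion and control the tails of $\psi$, is the standard proof and essentially the one in the cited source.
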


%\begin{theorem}\label{thm:Z_n_approximation_original}
%	Let $f$ be a probability density function on $\mathbb{R}$ such that $f\in L^p\pa{\mathbb{R}}$ for some $p>1$ 
%	$$\int_{\mathbb{R}} v^2 f(v)dv=1 \quad \text{and}\quad  \int_{\mathbb{R}}v^4 f(v)dv <\infty.$$ Then $\mathcal{Z}_n(f,\sqrt{r})$ is well defined for any $n\in \br{1,\dots,N}$ and $r\in \rpa{0,n}$, and
%	\begin{equation}\label{eq:Z_n_approximation_original}
	%		\mathcal{Z}_n(f,\sqrt{r})=\frac{2}{\sqrt{n}\Sigma \left\lvert \mathbb{S}^{n-1}\right\rvert  r^{\frac{n-2}{2}}}\left( \frac{e^{-\frac{(r-n)^2}{2n\Sigma^2}}}{\sqrt{2\pi}}+\lambda_n(r) \right),
	%	\end{equation}
%	where $\Sigma^2 = \int_{\mathbb{R}}v^4f(v)dv - 1$ and $\sup_{r>0} \abs{\lambda_n(r)}\underset{n\rightarrow\infty}{\longrightarrow}0$.
%\end{theorem}
Besides the fact that the above theorem gives us relatively natural conditions under which conditioned tensorisation of a probability density $f$ makes sense, plugging \eqref{eq:Z_n_approximation_original} in \eqref{eq:marginal_conditioned_tensoriation} shows almost immediately the such states are $f-$chaotic.

The usefulness of this local CLT doesn't end there - it shows that conditioned tensorisation are indeed very natural in Kac's setting as their entropy and production scale linearly in $N$. This, in turn, will be the crucial step we need to show Carlen et. al's result and its extension to Theorem \ref{thm:villani_conjecture}.
\subsection{Conditioned tensorisation and entropic convergence to equilibrium}
Consider the conditioned tensorisarion of $f$, $F_N$, where $f$ satisfies the conditions of Theorem \ref{thm:Z_n_approximation_original}. Then, as can be seen in \cite{CCLLV2010,Einav2011Kac}, 
\begin{equation}\label{eq:entropy_of_conditioned}
	\begin{aligned}
		\HH_N\pa{F_N} = &\frac{N\abs{\sph^{N-2}}}{N^{\frac{N-2}{2}}\abs{\sph^{N-1}}\ZN}\Bigg[\int_{-\sqrt{N}}^{N}f(v)\log f(v) \pa{N-v^2}^{\frac{N-3}{2}}\\
		&\ZZ_{N-1}\pa{f,\sqrt{N-v^2}}dv\Bigg] - \log \pa{\ZN}.
	\end{aligned}
\end{equation}
and
\begin{equation}\label{eq:production_of_conditioned}
	\begin{aligned}
		\DD_N\pa{F_N} = &\frac{N\abs{\sph^{N-3}}}{4\pi N^{\frac{N-2}{2}}\abs{\sph^{N-1}}\ZN}\\
		&\int_{0}^{2\pi}d\theta \int_{v_1^2+v_2^2 \leq N}\psi\pa{f(v_1)f(v_2),f\pa{v_1\pa{\theta}}f\pa{v_2\pa{\theta}}}\\
		%		\log \pa{\frac{f(v_1)f(v_2)}{f\pa{v_1\pa{\theta}}f\pa{v_2\pa{\theta}}}}\pa{f(v_1)f(v_2)-f\pa{v_1\pa{\theta}}f\pa{v_2\pa{\theta}}}\\
		&\pa{N-v_1^2-v_2^2}^{\frac{N-4}{2}}\ZZ_{N-2}\pa{f,\sqrt{N-v_1^2-v_2^2}}dv_1dv_2.
	\end{aligned}
\end{equation}
where 
\begin{equation}\label{eq:production_generator}
	\psi\pa{x,y} = \pa{x-y}\log \pa{\frac{x}{y}}.
\end{equation}

Utilising Theorem \ref{thm:Z_n_approximation_original} we conclude that

\begin{theorem}\label{thm:entropic_convergence}
	Let $f$ be a probability density on $\R$. Assume that the conditions of Theorem \ref{thm:Z_n_approximation_original} hold and let $F_N$ be the conditioned tensorisation of $f$.  Then
	\begin{equation}\nonumber
		\lim_{N\to\infty}\frac{\HH_N\pa{F_N}}{N}=\int_{\R}f(v)\log f(v) dv +\frac{1}{2}+\frac{\log 2\pi}{2}=\HH\pa{f|\MM}
	\end{equation}
	and 
	\begin{equation}\nonumber
		\lim_{N\to\infty}\frac{\DD_N\pa{F_N}}{N}=\frac{1}{4\pi}\int_{\R^2 \times [0,2\pi]}\psi\pa{f(v_1)f(v_2),f\pa{v_1\pa{\theta}}f\pa{v_2\pa{\theta}}}dv_1dv_2d\theta=\frac{\DD\pa{f}}{2}
	\end{equation}
	where
	\begin{equation}\label{eq:def_of_DD}
		\begin{aligned}
			&\DD(f)=\frac{1}{2\pi}\int_{\R\times [0,2\pi]}\psi\pa{f(v)f(v_\ast),f\pa{v\pa{\theta}}f\pa{v_{\ast}\pa{\theta}}}dv dv_\ast d\theta,
		\end{aligned}		
	\end{equation}
	with $v_1\pa{\theta}$ and $v_2\pa{\theta}$, as well as $v\pa{\theta}$ and $v_\ast\pa{\theta}$, given by the same formula as \eqref{eq:collision_velocitis}.
\end{theorem}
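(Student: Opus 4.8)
The plan is to derive both limits from a single dominated-convergence computation, obtained by feeding the local central limit theorem, Theorem~\ref{thm:Z_n_approximation_original}, into the closed expressions \eqref{eq:entropy_of_conditioned} and \eqref{eq:production_of_conditioned}. The key structural fact is that every $N$-dependent factor appearing in those formulae — the powers $\pa{N-v^2}^{\frac{N-3}{2}}$ and $\pa{N-v_1^2-v_2^2}^{\frac{N-4}{2}}$, the ratios $N^{\frac{N-2}{2}}$, and the surface areas $\abs{\sph^{N-k}}$ — is precisely what is produced (and hence cancelled) when one writes $\ZZ_{N-1}$, $\ZZ_{N-2}$ and $\ZN$ via \eqref{eq:Z_n_approximation_original}. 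What survives is an integral of a fixed density ($f\log f$, respectively $\psi\pa{f(v_1)f(v_2),f\pa{v_1(\theta)}f\pa{v_2(\theta)}}$) against a weight that converges pointwise to a constant. Conceptually this is nothing but the statement, visible through \eqref{eq:Z_n_and_h} and \eqref{eq:marginal_conditioned_tensoriation}, that $\HH_N(F_N)/N$ and $\DD_N(F_N)/N$ are expectations against the law of the (renormalised) kinetic energy $\sum_i \mathscr{V}_i^2$, and that the local CLT identifies the limit of that law.

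For the entropy I would first substitute \eqref{eq:Z_n_approximation_original} for $\ZZ_{N-1}\pa{f,\sqrt{N-v^2}}$ and for $\ZN$ in \eqref{eq:entropy_of_conditioned}. After the cancellations just mentioned, the factor in front of the $v$-integral equals $N\sqrt{2\pi}\,(1+o(1))$: here one uses that the Gaussian in $\ZN$ sits at its own centre, so $\ZN=\tfrac{2}{\sqrt N\,\Sigma\,\abs{\sph^{N-1}}N^{(N-2)/2}}\pa{\tfrac{1}{\sqrt{2\pi}}+\lambda_N(N)}$ with $\lambda_N(N)\to 0$, while the Gaussian attached to $\ZZ_{N-1}$ has argument $-\tfrac{(1-v^2)^2}{2(N-1)\Sigma^2}\to 0$ and is bounded by $1$. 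Thus $\tfrac1N$ of the integral term converges, by dominated convergence with dominating function $\abs{f\log f}/\sqrt{2\pi}$, to $\int_\R f(v)\log f(v)\,dv$; the needed bound $\int_\R\abs{f\log f}\,dv<\infty$ follows from the hypotheses of Theorem~\ref{thm:Z_n_approximation_original}, since $f\in L^p$ with $p>1$ controls $\int f(\log f)_+$ and $\int v^2 f=1$ controls $\int f(\log f)_-$ through Jensen's inequality applied to $\int f\log(\MM/f)\le 0$. For the remaining term $-\tfrac1N\log\ZN$ I would insert $\abs{\sph^{N-1}}=2\pi^{N/2}/\Gamma(N/2)$ together with Stirling's formula into the asymptotics of $\ZN$; the $\tfrac N2\log N$ contributions cancel, and one is left with $-\tfrac1N\log\ZN\to\tfrac12+\tfrac{\log 2\pi}{2}$. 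Adding the two limits gives $\int_\R f\log f+\tfrac12+\tfrac{\log 2\pi}{2}=\HH\pa{f|\MM}$.

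The production follows the same three steps applied to \eqref{eq:production_of_conditioned}, now with $\ZZ_{N-2}\pa{f,\sqrt{N-v_1^2-v_2^2}}$: the power $\pa{N-v_1^2-v_2^2}^{\frac{N-4}{2}}$ cancels, the $\abs{\sph^{N-3}}$ cancel, the relevant Gaussian argument is $-\tfrac{(2-v_1^2-v_2^2)^2}{2(N-2)\Sigma^2}\to 0$, and — after absorbing the $1/\sqrt{2\pi}$ from the limiting Gaussian — the prefactor of the double integral becomes $\tfrac{N}{4\pi}\,(1+o(1))$. Since $\psi\ge 0$ and the Gaussian factor is $\le 1$, dominated convergence (dominating by a constant multiple of $\psi$, using the uniform smallness of $\lambda_{N-2}$) yields $\tfrac1N\DD_N(F_N)\to\tfrac{1}{4\pi}\int_{\R^2\times[0,2\pi]}\psi\pa{f(v_1)f(v_2),f\pa{v_1(\theta)}f\pa{v_2(\theta)}}\,dv_1dv_2d\theta$, which is exactly $\DD(f)/2$ by the definition \eqref{eq:def_of_DD}. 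If this limiting integral is $+\infty$, the same estimates together with Fatou's lemma give the consistent conclusion $\DD_N(F_N)/N\to+\infty$.

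The main obstacle, and essentially the only genuinely delicate point, is the interchange of limit and integral. There are two subtleties. The first is the integrability required to dominate: $\int_\R\abs{f\log f}$ and $\int_{\R^2\times[0,2\pi]}\psi(\cdots)$ must both be extracted from $f\in L^p$, $\int v^2 f=1$, $\int v^4 f<\infty$ — the entropy case is the short Jensen argument above, the production case requires a bit more care (or else one falls back on the Fatou argument). The second is the range of validity of the local CLT: in \eqref{eq:entropy_of_conditioned} one evaluates $\ZZ_{N-1}$ at squared radius $N-v^2$, which slightly exceeds $N-1$ when $\abs v<1$, and similarly $\ZZ_{N-2}$ at $N-v_1^2-v_2^2$ when $v_1^2+v_2^2<2$; since this is still within $[0,2(N-1)]$ (resp.\ $[0,2(N-2)]$), one invokes the version of Theorem~\ref{thm:Z_n_approximation_original} that holds on such a slightly enlarged interval (its proof being insensitive to the precise endpoint). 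Everything else — in particular the Stirling bookkeeping that produces the explicit constant $\tfrac12+\tfrac{\log 2\pi}{2}$ — is routine.
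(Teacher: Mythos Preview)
Your proposal is correct and follows precisely the route the paper indicates: the paper does not give a detailed argument here but simply records that the result is obtained by ``utilising Theorem~\ref{thm:Z_n_approximation_original}'' on the closed formulae \eqref{eq:entropy_of_conditioned} and \eqref{eq:production_of_conditioned}, deferring the computation to \cite{CCLLV2010,Einav2011Kac}. Your substitution of the local CLT asymptotics, the cancellation of the geometric factors, the Stirling computation yielding $\tfrac12+\tfrac{\log 2\pi}{2}$, and the dominated-convergence/Fatou passage are exactly the steps carried out in those references; your identification of the two delicate points (integrability of $f\log f$ and $\psi$, and the slight overshoot of the radius beyond $N-1$ or $N-2$) is also accurate.
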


Besides further cementing the status of conditioned tensorisation as a worthy and natural family to explore in the setting of Kac's model, Theorem \ref{thm:entropic_convergence} is the key to showing Carlen et. al. claim that 
$$\inf_{N\in\N}\Gamma_N = \inf_{N\in\N}\inf_{F_N\dsn\in \mathcal{P}\pa{\kac}}\frac{\DD_N\pa{F_N}}{\HH_N\pa{F_N}}=0.$$
Indeed, as long as $f$ satisfies the condition of Theorem \ref{thm:Z_n_approximation_original}, we find that by considering the family of condition tensorisation of $f$, which we will denote by $\rpa{f^{\otimes N}}$, that
\begin{equation}\label{eq:entropic_spectral_uniform_estimate}
	\inf_{N\in\N}\Gamma_N \leq \lim_{N\to\infty} \frac{\DD_N\pa{\rpa{f^{\otimes N}}}}{\HH_N\pa{\rpa{f^{\otimes N}}}} = \frac{\DD\pa{f}}{2\HH\pa{f|\MM}}.
\end{equation}
Following up on the intuition we presented in the end of \S\ref{subsec:entropy} we consider a ``generator'' $f$ that somehow gives an average particle very low probability to be very energetic and almost full probability to be stable. A natural choice is given by\footnote{note that $\int_{\R} v^2 \MM_{a}(v)dv = a$ which shows that each part of $f_{\delta}$ carries the same kinetic energy.}
\begin{equation}\label{eq:f_carlen_choice}
	f_{\delta}(v) = \delta \MM_{\frac{1}{2\delta}}(v) + \pa{1-\delta}\MM_{\frac{1}{2\pa{1-\delta}}}(v)
\end{equation}
where $\MM_a(v) = \pa{2\pi a}^{-1/2}e^{-\frac{v^2}{2a}}$ and $0<\delta<1$. Plugging it in \eqref{eq:entropic_spectral_uniform_estimate} yields the estimate
$$\inf_{N\in\N}\Gamma_N  \leq -C\delta \log \pa{\delta},$$
where $C$ is a fixed bounded constant. Taking $\delta$ to zero gives the desired result.

The above idea, however, is not enough to give an explicit bound on every $\Gamma_N$ as is expressed in Theorem \ref{thm:villani_conjecture}. The reason behind this lies in the choice \eqref{eq:f_carlen_choice}. While it captures the intuition that an average particle has small probability to be very energetic, this probability - and consequently the (average) portion of the particles in the ensemble that will be very energetic, is \textit{fixed} and involves a non-negligible amount of particles. This is why we are getting a uniform in $N$ bound on $\Gamma_N$. In order to utilise the same idea and get an $N-$dependent bound for $\Gamma_N$ at the same time, we need to allow the number of particles that carry the high energy to be significantly smaller than $N$. In other words, we need to let the probability that an average particle will be energetic to decrease with $N$. We will, thus, consider a generator of the form 
\begin{equation}\label{eq:f_my_choice}
	f_{\delta_N}(v) = \delta_N \MM_{\frac{1}{2\delta_N}}(v) + \pa{1-\delta_N}\MM_{\frac{1}{2\pa{1-\delta_N}}}(v)
\end{equation}
for some choice of $\br{\delta_N}_{N\in\N}$ that goes to zero as $N$ goes to infinity. \\
While this seems not too different to what we've done before, there is a crucial ingredient in being able to use the above idea that is new. The result of Carlen et. al.'s relied heavily on the local-CLT theorem which is expressed in Theorem \ref{thm:Z_n_approximation_original}. It is unclear if the same result holds in this case due to the following two reasons:  
\begin{enumerate}[1)]
	\item Theorem \ref{thm:Z_n_approximation_original} has considered the expression \eqref{eq:Z_n_and_h}. Since our function $f_N$ depends on $N$ we need to be careful when we tensorise it (or the associated $h_N$) a number of times that is of order $N$. Such orders are particularly important to us as the rescaled entropy and productions depend on $\ZZ_N$, $\ZZ_{N-1}$ and $\ZZ_{N-2}$.
	\item A straight forward calculation shows that 
	$$\int_{\R} v^4 f_{\delta_N}(v)dv =\frac{3}{4\delta_N\pa{1-\delta_N}} $$
	which is not bounded as $N$ goes to infinity. This implies that the conditions of Theorem \ref{thm:Z_n_approximation_original} do not hold uniformly in $N$.
\end{enumerate}

The resolution to the above issues was given in \cite{Einav2011Kac} where the author developed, under certain conditions, a new local-CLT that allowed for dependence in $N$ in the probability density for the generator of the conditioned tensorisation. The theorem allowed for some blow up in $N$ for the fourth moment We will not state it here, as it is quite technical, and refer the interested reader to the aforementioned paper. 

With this local-CLT the author has managed to show the following concentration estimate for the normalisation function of $f_{\delta_N}$, defined by \eqref{eq:f_my_choice}:

\begin{theorem} \label{thm:approximation_of_ZZ_N_N_dependent}
	Let $\br{\delta_N}_{N\in\N}$ be a sequence of positive numbers that satisfies
	%	$f_{\delta_{N}}(v)=\delta_{N} M_{\frac{1}{2\delta_{N}}}(v)+(1-\delta_{N})M_{\frac{1}{2(1-\delta_{N})}}(v)$
	%	such that 
	\begin{itemize}
		\item $\delta_N$ is dominated by a some power of $N$.
		\item There exists $\beta>0$ such that 
		$$\delta_{N}^{1+2\beta} N\underset{N\rightarrow\infty}{\longrightarrow}\infty$$
		and 
		$$\delta_{N}^{1+3\beta} N\underset{N\rightarrow\infty}{\longrightarrow}0.$$
	\end{itemize}
	Then, for any fixed $j$ 
	\begin{equation}
		\begin{aligned}
			Z_{N-j}\left(f_{\delta_{N}},\sqrt{u}\right)=&\frac{2}{\sqrt{N-j}\cdot\Sigma_{\delta_{N}}\cdot|\mathbb{S}^{N-j-1}|u^{\frac{N-j}{2}-1}}\\
			&\left(\frac{e^{-\frac{\left(u-N+j\right)^{2}}{2(N-j)\Sigma_{\delta_{N}}^{2}}}}{\sqrt{2\pi}}+\lambda_{j}(N-j,u)\right)
		\end{aligned}
	\end{equation}
	with
	$$\lim_{N\to\infty}\sup_{u\in\mathbb{R}}\left|\lambda_{j}(N-j,u)\right|=0,$$
	where 
	%	$$f_{\delta_{N}}(v)=\delta_{N} M_{\frac{1}{2\delta_{N}}}(v)+(1-\delta_{N})M_{\frac{1}{2(1-\delta_{N})}}(v),$$
	%	and
	$$\Sigma_{\delta_N}^2 = \int_{\R}v^4 \delta_N(v)dv -1=\frac{3}{4\delta_N\pa{1-\delta_N}}-1.$$ 
	%	and $\lim_{N\rightarrow\infty}\epsilon_{j}(N)=0$.
\end{theorem}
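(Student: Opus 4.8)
The plan is to recognise the statement as a \emph{local central limit theorem with an $N$-dependent summand law} and to prove it by a characteristic-function argument in which the (blowing-up) fourth moment is tracked explicitly. First I would reduce: applying \eqref{eq:Z_n_and_h} with $n=N-j$ and $f=f_{\delta_N}$ gives
\begin{equation}\nonumber
	\ZZ_{N-j}\pa{f_{\delta_N},\sqrt{u}}=\frac{2\,h_{\delta_N}^{\otimes(N-j)}(u)}{u^{\frac{N-j}{2}-1}\,\abs{\sph^{N-j-1}}},
\end{equation}
where $h_{\delta_N}$ is the density of $\mathscr{V}^2$ obtained from $f_{\delta_N}$ via \eqref{eq:h_and_f}; since $j$ is fixed we have $n=N-j\to\infty$, and cancelling the geometric factor shows the theorem is \emph{equivalent} to $\sqrt{n}\,\Sigma_{\delta_N}\,\abs{h_{\delta_N}^{\otimes n}(u)-g_n(u)}\to 0$ uniformly in $u$, where $g_n$ is the Gaussian density of mean $n$ and variance $n\Sigma_{\delta_N}^2$, and where the summand $\mathscr{V}_i^2$ has the $N$-dependent law $h_{\delta_N}$ with mean $1$ and variance $\Sigma_{\delta_N}^2=\frac{3}{4\delta_N(1-\delta_N)}-1\asymp\delta_N^{-1}\to\infty$. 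Writing $\widehat{h_{\delta_N}}$ for the characteristic function of $\mathscr{V}^2$ and using Fourier inversion for both $h_{\delta_N}^{\otimes n}$ and $g_n$, one gets, \emph{uniformly in $u$},
\begin{equation}\nonumber
	\abs{h_{\delta_N}^{\otimes n}(u)-g_n(u)}\le\frac{1}{2\pi}\int_{\R}\abs{\widehat{h_{\delta_N}}(\xi)^n-e^{in\xi-\frac{n\Sigma_{\delta_N}^2}{2}\xi^2}}\,d\xi,
\end{equation}
so the uniformity in $u$ is automatic. Factoring out the mean phase, $\widehat{h_{\delta_N}}(\xi)=e^{i\xi}\Psi_{\delta_N}(\xi)$ with $\Psi_{\delta_N}$ the characteristic function of the centred variable $\mathscr{V}^2-1$, and rescaling $\xi=t/(\sqrt{n}\,\Sigma_{\delta_N})$, the goal becomes the single estimate
\begin{equation}\nonumber
	I_n:=\int_{\R}\abs{\Psi_{\delta_N}\!\pa{\tfrac{t}{\sqrt{n}\,\Sigma_{\delta_N}}}^{\!n}-e^{-t^2/2}}\,dt\underset{N\to\infty}{\longrightarrow}0 .
\end{equation}

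Next I would split $I_n$ into low, intermediate and high frequencies, using throughout the fully explicit identity
\begin{equation}\nonumber
	\widehat{h_{\delta_N}}(\xi)=\delta_N\pa{1-\tfrac{i\xi}{\delta_N}}^{-1/2}+\pa{1-\delta_N}\pa{1-\tfrac{i\xi}{1-\delta_N}}^{-1/2}
\end{equation}
coming from the two Gaussian components of $f_{\delta_N}$ (whose squares are scaled $\chi^2_1$ variables).
\begin{itemize}
	\item \emph{High frequencies} $\abs{\xi}\ge A$: the explicit formula gives $\abs{\widehat{h_{\delta_N}}(\xi)}\lesssim\abs{\xi}^{-1/2}$, so $\abs{\widehat{h_{\delta_N}}(\xi)}^n$ is integrable and super-polynomially small once $n$ is large; the polynomial prefactor $\sqrt{n}\,\Sigma_{\delta_N}$ is harmless because $\delta_N$, hence $\Sigma_{\delta_N}$, is at most polynomial in $N$ by hypothesis.
	\item \emph{Low frequencies} $\abs{t}\le T_N$ with $T_N\to\infty$ slowly: on $\abs{\xi}\lesssim\delta_N$ the power series of the first component converges and yields an Edgeworth-type expansion $\log\Psi_{\delta_N}(\xi)=-\tfrac12\Sigma_{\delta_N}^2\xi^2+O\pa{\delta_N^{-2}\abs{\xi}^3}$; after rescaling this region is $\abs{t}\lesssim\sqrt{n\delta_N}$, which tends to infinity precisely because $\delta_N^{1+2\beta}N\to\infty$ forces $n\delta_N\to\infty$, and there $n\log\Psi_{\delta_N}\pa{t/(\sqrt{n}\,\Sigma_{\delta_N})}=-\tfrac{t^2}{2}+O\pa{\abs{t}^3/\sqrt{n\delta_N}}$, so choosing $T_N\to\infty$ with $T_N^3=o(\sqrt{n\delta_N})$ kills this contribution.
	\item \emph{Intermediate frequencies} $T_N\le\abs{t}\le A\sqrt{n}\,\Sigma_{\delta_N}$: one needs $\abs{\Psi_{\delta_N}(\xi)}^n$ to be integrably small; on the part still inside the analytic window one keeps $\abs{\Psi_{\delta_N}(\xi)}\le e^{-\frac14\Sigma_{\delta_N}^2\xi^2}$, while on the remaining range $\abs{\xi}\asymp 1$ one has only the crude bound $\abs{\widehat{h_{\delta_N}}(\xi)}\le\delta_N+(1-\delta_N)\abs{1-\tfrac{i\xi}{1-\delta_N}}^{-1/2}\le 1-c\xi^2+O(\delta_N)$, whose $n$-th power is small only when $\delta_N$ decays fast enough in $N$ — this is where $\delta_N^{1+3\beta}N\to0$ is used.
\end{itemize}
Assembling the three pieces gives $I_n\to 0$ and hence the theorem, and in fact this is exactly the specialisation to $f_{\delta_N}$ of the general $N$-dependent local central limit theorem of \cite{Einav2011Kac}.

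The main obstacle is the intermediate régime. Because the energetic component carries only weight $\delta_N$, its contribution $\delta_N(1-i\xi/\delta_N)^{-1/2}$ to $\widehat{h_{\delta_N}}(\xi)$ is essentially a constant of size $\delta_N$ as soon as $\abs{\xi}\gg\delta_N$, so for $\abs{\xi}$ of order one the best naïve estimate is $\abs{\widehat{h_{\delta_N}}(\xi)}\le 1-c\xi^2+O(\delta_N)$, and raising it to the $n$-th power produces a spurious factor $e^{Cn\delta_N}$ that \emph{blows up} since $n\delta_N\to\infty$. Beating this factor forces the intermediate range to start from a frequency whose rescaled size — together with the requirement that the Edgeworth expansion already be valid there — can be met only when $\delta_N$ lies in the polynomial window cut out by $\delta_N^{1+2\beta}N\to\infty$ and $\delta_N^{1+3\beta}N\to0$. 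Making the exponents line up across the three regions is the heart of the argument; everything else is the classical local-CLT machinery applied with care to moments that are allowed to grow with $N$.
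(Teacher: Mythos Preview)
The paper does not prove this theorem but defers entirely to \cite{Einav2011Kac}; your outline --- reduce via \eqref{eq:Z_n_and_h} to an $N$-dependent local CLT for $h_{\delta_N}^{\otimes(N-j)}$, Fourier-invert to kill the $u$-dependence, exploit the explicit $\chi^2_1$-mixture form of $\widehat{h_{\delta_N}}$, and split the frequency integral into three regimes while tracking the blowing-up moments --- is precisely the strategy carried out there, and you correctly flag the intermediate regime as the place where the two $\beta$-conditions must be balanced.

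One point of bookkeeping worth tightening: the Taylor/Edgeworth window for $\log\Psi_{\delta_N}$ is governed by $\Sigma_{\delta_N}^{2}\xi^{2}\lesssim 1$, i.e.\ $|\xi|\lesssim\sqrt{\delta_N}$, not by the branch-point location $|\xi|\lesssim\delta_N$; with the larger window the rescaled low-frequency region already reaches $|t|\lesssim\sqrt{n}$, and the ``spurious $e^{Cn\delta_N}$'' obstruction you describe is not quite the mechanism through which $\delta_N^{1+3\beta}N\to 0$ enters in \cite{Einav2011Kac}. This does not affect the correctness of the plan, only the precise matching of regime endpoints to the hypotheses.
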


Using the above theorem for the conditioned tensorisation of $f_{\delta_N}$ with $\delta_N = N^{2\beta -1}$ and $0<\beta <1/6$ chosen arbitrarily, together with \eqref{eq:entropy_of_conditioned} and \eqref{eq:production_of_conditioned} gives us the proof of Theorem \ref{thm:villani_conjecture}
%and the variant of Theorem \ref{thm:Z_n_approximation_original} which followed from , the conditioned tensorisation of $f_{\delta_N}$, given by \eqref{eq:f_my_choice}, with $\delta_N = N^{2\beta -1}$ where $0<\beta <1/6$ is arbitrary gives Theorem \ref{thm:villani_conjecture}.

\begin{remark}\label{rem:additional_CLT}
	We would like to mention at this point that additional extensions of the local-CLT presented in Theorem \ref{thm:Z_n_approximation_original} have been explored. In particular, in their work \cite{CE2013Levi} the authors have developed a similar concentration result for the normalisation function of a probability density function $f$ such that $f\in L^p\pa{\R}$ for some $p>1$, $\int_{\R}v^2f(v)dv=1$ and 
	$$\int_{-\sqrt{x}}^{\sqrt{x}}v^4 f(v) \underset{x\to\infty}{\sim}C x^{2-\alpha}$$
	for some $C>0$ and $1<\alpha<2$. In this case the concentration of $\ZZ_N$ is measured not with a Gaussian, as in theorems \ref{thm:Z_n_approximation_original} and \ref{thm:approximation_of_ZZ_N_N_dependent}, but with a the attractor that is associated to an $\alpha-$stable L\'evi process. 
\end{remark}

While our attempt to utilise the mean field limit approach with our entropic study of convergence to equilibrium in Kac's model has not been successful, we have found intuitive families of chaotic states - conditioned tensorisations. These families are a crucial ingredient in what comes next - the (somewhat) fulfilment of Kac's vision.

\section{Kac's vision fulfilled}\label{sec:vision_fulfilled}
\subsection{New notions of chaoticity}\label{subsec:new_chaos}
When we discussed the entropic study of the convergence to equilibrium in Kac's model back in \S\ref{subsec:entropy}, we motivated our decision to choose the entropy as our ``distance'' with the intuitive reasoning that the entropy of chaotic states should scale linearly in $N$. At no point, however, did we prove that it is true in general. We did show in our previous section that conditioned tensorisations do satisfy this property. Moreover - their entropy production \textit{also} scales linearly in $N$. These observations motivate the following definitions
\begin{definition}\label{def:chaos_extended}
	Let  $\br{F_N}_{N\in\N}$ be an $f-$chaotic family of probability densities in $\mathcal{P}\pa{\kac,\dsn}$. % with $f$ being a probability density on $\R$.
	\begin{enumerate}[(i)]
		\item We say that $\br{F_N}_{N\in\N}$ is $f-$\textit{entropically chaotic} if 
		\begin{equation}\label{eq:entropic_chaotic}
			\lim_{N\to\infty}\frac{\HH_N\pa{F_N}}{N} = \HH\pa{f|\MM}.
		\end{equation}
		\item We say that $\br{F_N}_{N\in\N}$ is $f-$\textit{strongly entropic chaotic} if in addition to the above
		\begin{equation}\label{eq:strongly_entropic_chaotic}
			\lim_{N\to\infty}\frac{\DD_N\pa{F_N}}{N} = \frac{\DD\pa{f}}{2},
		\end{equation}
		where $\DD\pa{f}$ is given in \eqref{eq:def_of_DD}.
	\end{enumerate}
\end{definition} 

The notion of entropic chaoticity was originally defined and investigated in \cite{CCLLV2010}. Further work, such that of Hauray and Mischler \cite{HM2014}, have expanded on this study and even showed that under certain conditions the chaoticity of a family $\br{F_N}_{N\in\N}$ can follow from condition \eqref{eq:entropic_chaotic}. 

The reason why we identified and defined these new stronger notion of chaoticity is that on closer examination of our approach to the investigation of the convergence to equilibrium, such notions offer a different avenue for attempting to deduce information for the Boltzmann-Kac equation \eqref{eq:boltzmann_kac} from Kac's model. 

In \S\ref{sec:kac_model} we tried to use the entropy method to find an explicit convergence rate for $\HH_N\pa{F_N(t)}$ in hope that we can then pass it to $\HH\pa{f(t)|\MM}$ by rescaling Kac's entropy and taking $N$ to infinity - implicitly assuming entropic chaoticity. By doing so, however, we have lost some of the geometry of the process as we've ``thrown away'' the functional inequality we've found and only kept its conseqeunce.
%attained via the entropy method whose consequence is the aforementioned convergence rate.

The idea that ``nice enough'' chaotic states, like strongly entropic ones, scale linearly in $N$ both in entropy and its production give rise to the intuition that we should look for a functional inequality of the form

$$\frac{\DD_{N}\pa{F_N}}{N} \geq \Phi\pa{\frac{\HH_N\pa{F_N}}{N}},$$
for some function $\Phi$.

%The notion of strong entropic chaoticiy, however, gives us the means to rectify this as for families that are strongly entropically chaotic we would be able to ``push down'' the functional inequality itself. 

\subsection{What do we expect - the entropy method for the Boltzmann equation}\label{subsec:what_to_expect_from_boltzmann}
Since the time when Kac presented his model and expressed his hope to infer information on the Boltzmann equation from it, many techniques to deal with non-linear equations have been developed. In fact, nowadays many questions that pertain to various aspects of Kac's model are more complicated to solve than similar questions for the Boltzmann equation. 

Before we delve into exploring functional inequalities of rescaled entropy and its production, we want to pause and reflect on the study of the entropy method in the Boltzmann equation. 

The paper where Villani's managed to show that the entropic spectral gap $\Gamma_N$ is bounded from below by $2/(N-1)$, \cite{V2003}, was in fact mostly dedicated to the study of the entropy method for the Boltzmann equation. \amit{In their work \cite{BC1999}, Bobylev and Cercignani have} managed to show that when the collision kernel $B$ is given by 
$$B\pa{v,v_\ast,\sigma}=1+\abs{v-v_\ast}^\gamma$$
\amit{with $\gamma\in [0,2)$} then one finds that for the spatially homoegenous Boltzmann equation
$$\inf_{fdv \in \mathcal{P}\pa{\R^d}}\frac{\mathfrak{D}_{\gamma}\pa{f}}{\HH\pa{f|\MM}}=0,$$
where 
%$$\HH\pa{f|\MM} = \int_{\R^d}f(v)\log\pa{\frac{f(v)}{\MM(v)}}dv$$
%and
$$\mathfrak{D}_\gamma \pa{f} = \frac{1}{4}\int_{\R^d\times\R^d\times \sph^{d-1}}\pa{1+\abs{v-v_\ast}^\gamma}\psi\pa{f\pa{v}f\pa{v_\ast},f\pa{v^\prime, v^\prime_{\ast}}}dvdv_\ast d\sigma, $$
with $\psi$ defined as in \eqref{eq:production_generator}. Note that this result is consistent with the study on Kac's sphere.

\amit{Inspired by the works of Carlen and Carvahlo \cite{CC1992,CC1994},} \amit{Toscani and} Villani \amit{managed to show in \cite{TV1999}} that if one assumes that $f$ is smooth enough and has a Maxwellian-type lower bound (we leave the precise conditions to \cite{TV1999}) we have that for any $\epsilon>0$ there exists a constant $C_{\epsilon}(f)$ that depends on $f$ such that 
\begin{equation}\label{eq:villani_almost_spectral}
	\mathfrak{D}_{\gamma}(f) \geq C_{\epsilon}(f) \HH\pa{f|\MM}^{1+\epsilon}
\end{equation}
giving an ``almost'' entropic gap result. \amit{In his work \cite{V2003}, Villani has extended the result to various other collusion kernels as well as have shown the existence of an entropic spectral gap for the super quadratic collision kernel (i.e. when $\gamma=2$).}

Following on the above, and on the intimate interplay between Kac's model and the Boltzmann equation, it is reasonable for us to seek inequalities of the form \eqref{eq:villani_almost_spectral} on the level of Kac's sphere.

\subsection{General Kac's models and the ``correct'' inequalities}\label{subsec:rescaled_inequalities}

In the context of Boltzmann equation, Kac's original model fitted into the framework of Maxwellian molecules (i.e., the case where the collision kernel $B$ is constant). Extensions of this model can be made, at least formally, to the case where the collisions in Kac's process occurs with an intensity that depends on the scattering (i.e. on $\theta$) and the velocities of the colliding particles (i.e. on $v_i$ and $v_j$ if the particles that collided were the $i-$th and $j-$th ones)\footnote{in terms of a jump processes, the dependence on $v_i$ and $v_j$ is slightly problematic but we continue as if no additional justification for the proposed model is needed.}. 

The natural extension to Kac's master equation which we will consider here is given by
\begin{equation}\label{eq:master_kac_extended}
	\partial_t F_N \pa{t,\bm{V}_N}=\mathcal{L}_{N,\gamma} F_N,  \qquad \bm{V}_N\in \mathbb{S}^{N-1}\pa{\sqrt{N}}
\end{equation}
where
$$\mathcal{L}_{N,\gamma}F\pa{\bm{V}_N}=-\frac{N}{2\pi}\frac{1}{\pa{\begin{tabular}{c} $N$ \\ $2$ \end{tabular}}}\sum_{i<j}\pa{1+\pa{v_i^2+v_j^2}}^{\gamma}
\int_0^{2\pi}\pa{F_N\pa{\bm{V}_N}-F_N\pa{R_{i,j,\theta}\pa{\bm{V}_N}}}d\theta,$$
and $0\leq \gamma \leq 1$.

Under the assumption of the propagation of chaos we can repeat the mean field approach in this model to find the limit equation
\begin{equation}\label{eq:kac_boltzmann_extended}
	\begin{aligned}
		\partial_t f(v) =  \frac{1}{\pi}\int_{-\pi}^{\pi} &\pa{1+ (v^2+w^2)}^{\gamma} \pa{f(v(\theta))f(w(\theta))
			- f(v)f(w)} d\theta dw,
	\end{aligned}
\end{equation}
where $v\pa{\theta}$ and $w\pa{\theta}$ are given by formula \eqref{eq:collision_velocitis}.

Much like in Kac's original model we can try and explore entropic convergence to equilibrium of \eqref{eq:master_kac_extended} by finding a connection between the entropy of the system, $\HH_N\pa{F_N}$, and its production
$$\DD_{N,\gamma} = -\inner{\log F_N, \mathcal{L}_{N,\gamma} F_N}_{L^2\pa{\kac,d\sigma_N}}.$$
A straight forward calculation finds that
\begin{equation}\label{eq:def_of_DD_N_gamma}
	\begin{split}
		\DD_{N,\gamma}(F_N)=\frac{1}{4\pi}\frac{N}{\pa{\begin{tabular}{c} $N$ \\ $2$ \end{tabular}}}&\sum_{i<j}\int_{\kac}\int_0^{2\pi}\pa{1+\pa{v_i^2+v_j^2}}^\gamma\\
		&\psi\pa{F_N\pa{\bm{V}_N},F_N\pa{ R_{i,j,\theta}\pa{\bm{V}_N}}}d\sigma_Nd\theta.
	\end{split}
\end{equation}
%\begin{equation}
%	\begin{aligned}
	%		\DD_{N,\gamma}(F_N)=&\frac{1}{4\pi}\frac{N}{\pa{\begin{tabular}{c} $N$ \\ $2$ \end{tabular}}}\sum_{i<j}\int_{\kac}\int_0^{2\pi}\pa{1+(v_i^2+v_j^2)}^\gamma\\
	%		&\pa{F_N-F_N\circ R_{i,j,\theta}}\log\pa{\frac{F_N}{F_N\circ R_{i,j,\theta}}}d\sigma_Nd\theta.
	%	\end{aligned}
%\end{equation}
where $\psi\pa{x,y}=\pa{x-y}\log \pa{x/y}$ (as defined in \eqref{eq:production_generator}). 

Following on the discussion in \S\ref{subsec:what_to_expect_from_boltzmann}  we are encouraged to search for a functional inequalities of the form
% \S\ref{subsec:new_chaos} we are encouraged to search for a functional inequalities of the form
%$$\frac{\DD_{N,\gamma}\pa{F_N}}{N} \geq \Phi\pa{\frac{\HH_N\pa{F_N}}{N}},$$
%where $\Phi$ is a given function, for a large class of chaotic states. 
%
%From the discussion in \S\ref{subsec:what_to_expect_from_boltzmann} we further restirct our search to inequalities of the form
\begin{equation}\label{eq:desired_rescaled_inequality}
	\frac{\DD_{N,\gamma}\pa{F_N}}{N} \geq C_{\epsilon}\pa{\frac{\HH_N\pa{F_N}}{N}}^{1+\epsilon}
\end{equation}
for a given, hopefully arbitrary, $\epsilon>0$. 

We naturally don't expect this to hold for all chaotic states. However, something can still be said in the general case: In his work \cite{V2003}, Villani has shown the following inequality:
\begin{equation}\label{eq:villani_genera_production_ratio}
	\DD_{N,\gamma}(F_N) \geq C_\gamma \frac{\HH_N(F_N)}{N^{1-\gamma}}.
\end{equation}
This inequality, however, is not of the form \eqref{eq:desired_rescaled_inequality} unless $\gamma=1$.

%On the other side of the spectrum of states we have our conditioned tensorisations, and more generally strongly entropic chaotic states, for which \eqref{eq:desired_rescaled_inequality} is reduced to an inequality like \eqref{eq:villani_almost_spectral}.\\
To try and find conditions under which we expect to have an inequality of the form \eqref{eq:desired_rescaled_inequality}, let us look again at the intuitive idea that chaotic states $F_N$ behaves like $f^{\otimes N}$. Since we've already given an intuitive outlook in \S\ref{subsec:entropy} to why $\HH_N\pa{F_N}\approx N\HH\pa{f|\MM}$ we turn our attention to $\DD_{N,\gamma}$. 

Following on \eqref{eq:def_of_DD_N_gamma} and using symmetry we see that
$$\frac{\DD_{N,\gamma}\pa{F_N}}{N}=\frac{1}{4\pi}\int_{\kac}\int_0^{2\pi}\pa{1+\pa{v_1^2+v_2^2}}^\gamma
\psi\pa{F_N\pa{\bm{V}_N},F_N\pa{ R_{1,2,\theta}\pa{\bm{V}_N}}}d\sigma_Nd\theta.$$
Plugging in $F_N\approx f^{\otimes N}$ we get
\begin{equation}\nonumber
	\begin{aligned}
		\psi&\pa{F_N\pa{\bm{V}_N},F_N\pa{ R_{1,2,\theta}\pa{\bm{V}_N}}} \\
		&\approx \pa{F_N\pa{\bm{V}_N}-F_N\pa{ R_{1,2,\theta}\pa{\bm{V}_N}}}\log\pa{\frac{f\pa{v_1\pa{\theta}}f\pa{v_2\pa{\theta}}}{f\pa{v_1}f\pa{v_2}}}
	\end{aligned}
\end{equation}
and notice that the dependence in the dimension is completely cancelled in the logarithm, leaving only an $L^1\pa{\kac,\dsn}$ like term. This, in a sense, is exactly why the scaling works for such states. As $F_N$ can't be expected to look like $f^{\otimes N}$, besides for conditioned tensorisation, we might need to impose stringer control on the integral that appears above. This motivates the following definition:

\begin{definition}\label{def:log_power}
	We say that a family of symmetric probability densities $\br{F_N}_{N\in\mathbb{N}}$ on $\pa{\kac,\dsn}$ has the log-power property of order $\beta>0$, or is  log-power of order $\beta$ in short, if there exists $\mathcal{C}_{\beta}>0$ such that
	\begin{equation}\label{eq:log_power}
		\sup_{N\in\N}\frac{1}{2\pi}\int_{0}^{2\pi}\int_{\kac} \psi_\beta\pa{F_N\pa{\bm{V}_N},F_N\pa{ R_{1,2,\theta}\pa{\bm{V}_N}}}d\sigma^N d\theta \leq \mathcal{C}_{\beta}^{1+\beta}
	\end{equation}
	where $\psi_\beta(x,y)=\abs{x-y}\abs{\log\pa{\frac{x}{y}}}^{1+\beta}$. We will call the constant $\mathcal{C}_{\beta}$ the \textit{log power} constant.
	%of order $\beta$} constant, or the log power constant when $\beta$ is explicitly known.
	\end{definition}
	
	The log power property is indeed the right ingredient we needed to achieve an inequality of the form \eqref{eq:desired_rescaled_inequality}. On its own it is not enough, but by adding a natural requirement of a uniform in $N$ moment control for $F_{N,1}$, Carlen, Carvalho and the author of this paper have managed to show the following  in \cite{CCE2018}:
	\begin{theorem}\label{thm:rescaled_inequality}
\sloppy Let $\br{F_N}_{N\in\N}$ be a family of symmetric density functions on $\pa{\kac,\dsn}$ . Assume in addition that there exists some $\beta>0$ such that $\br{F_N}_{N\in\N}$ is log-power of order $\beta$ and that there exists $k>1+1/\beta$ such that 
\begin{equation}\label{eq:moment_control}
	M_{2k} = \sup_{N\in\N}M_{2k}\pa{F_{N,1}}<\infty,
\end{equation}
where
\begin{equation}\label{eq:moment}
	M_{2k}\pa{F_{N,1}} = \int_{\R}v^{2k}F_{N,1}(v)dv = \int_{\kac}v_1^{2k}F_N\pa{\bm{V}_N}\dsn.
\end{equation}
Then, there exists an explicit $C_{k,\gamma,\beta}>0$ that depends only on $\beta$, the log power constant of $\br{F_N}_{N\in\N}$, $\mathcal{C}_{\beta}$, and $M_{2k}$ such that
\begin{equation}\label{eq:desired_inequality_thm}
	\begin{gathered}
		\frac{\DD_{N,\gamma}(F_N)}{N} \geq C_{k,\gamma,\beta}\pa{\frac{\HH_N(F_N)}{N}}^{1+\frac{(1-\gamma)(1+\beta)}{k\beta-(1+\beta)}}.
	\end{gathered}
\end{equation}
\end{theorem}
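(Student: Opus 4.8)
\section*{Proof proposal for Theorem \ref{thm:rescaled_inequality}}

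The plan is to build on Villani's inequality \eqref{eq:villani_genera_production_ratio}, which already has the right shape but the wrong power of $N$: dividing it by $N$ gives $\DD_{N,\gamma}(F_N)/N \ge C_\gamma\, N^{\gamma-1}\,\HH_N(F_N)/N$, whereas \eqref{eq:desired_inequality_thm} asks for the factor $\pa{\HH_N(F_N)/N}^{\epsilon}$ with $\epsilon=\tfrac{(1-\gamma)(1+\beta)}{k\beta-(1+\beta)}$ in place of $N^{\gamma-1}$. The idea is to use the log-power property and the moment bound to convert the lost powers of $N$ into powers of the rescaled entropy, by truncating the collision weight in velocity and optimising over the truncation level. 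It is worth recording two elementary observations that organise the argument: the hypothesis $k>1+1/\beta$ is exactly the statement $\kappa:=\tfrac{k\beta}{1+\beta}>1$, and with this notation $\epsilon=\tfrac{1-\gamma}{\kappa-1}$; the exponents $\kappa$ and $\gamma$ will play off against each other in the optimisation.

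First I would rewrite $\DD_{N,\gamma}(F_N)/N$, using the symmetry of $F_N$, as the single two-particle integral displayed just after \eqref{eq:def_of_DD_N_gamma}, and fix a truncation level $A\ge 1$. Since $R_{1,2,\theta}$ preserves $v_1^2+v_2^2$ (it is a rotation in the $(v_1,v_2)$-plane), the sets $\{v_1^2+v_2^2\le A\}$ and $\{v_1^2+v_2^2>A\}$ are respected by all the changes of variables used in manipulating the production and the entropy, so one may split cleanly along them. On the far set the weight obeys $(1+v_1^2+v_2^2)^{\gamma}\ge A^{\gamma}$, which is the amplification I want; on the near set it is still $\ge1$. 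Next I would bound the part of the (unweighted) Kac production carried by the far set: writing $\psi(x,y)=\psi_\beta(x,y)^{1/(1+\beta)}\,\abs{x-y}^{\beta/(1+\beta)}$ and applying H\"older's inequality first in $\bm{V}_N$ and then in $\theta$, the log-power bound \eqref{eq:log_power} controls the $\psi_\beta$ factor uniformly in $N$, while \eqref{eq:moment_control} together with Markov's inequality — again using $v_1(\theta)^2+v_2(\theta)^2=v_1^2+v_2^2$ to handle the $F_N\circ R_{1,2,\theta}$ term — yields $\int_{\kac}\1_{\{v_1^2+v_2^2>A\}}\abs{F_N-F_N\circ R_{1,2,\theta}}\,d\sigma_N\lesssim A^{-k}$. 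The outcome is an estimate of the form $(\text{unweighted production on the far set})\le N\,K\,A^{-\kappa}$ with $K$ explicit in $\beta,k,\mathcal{C}_{\beta},M_{2k}$ and $\kappa>1$; the same ingredients show that the contribution of configurations with $v_1^2+v_2^2>A$ to $\HH_N(F_N)$ is of the same small order, so that, up to negligible terms, the entropy is concentrated on the bulk $\{v_1^2+v_2^2\le A\}$.

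Finally I would combine these facts with a velocity-truncated version of Villani's inequality and optimise over $A$. The weighted production is at least $(\text{bulk production})+A^{\gamma}\times(\text{far production})$; the bulk production controls the bulk entropy — hence, by the previous step, essentially all of $\HH_N(F_N)$ — up to the loss factor inherent in Villani's estimate, while any mass of the far production is amplified by $A^{\gamma}$. Choosing $A$ to be an appropriate power of $\HH_N(F_N)/N$, so that these competing contributions balance (this is precisely where $\kappa>1$ is needed), converts the $N^{\gamma-1}$ factor of \eqref{eq:villani_genera_production_ratio} into $\pa{\HH_N(F_N)/N}^{\epsilon}$ with $\epsilon=(1-\gamma)/(\kappa-1)$, which is the exponent appearing in \eqref{eq:desired_inequality_thm}, and tracking the constants gives $C_{k,\gamma,\beta}$ in terms of $\beta$, $\mathcal{C}_{\beta}$ and $M_{2k}$.

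The main obstacle, I expect, is this last step. Villani's inequality \eqref{eq:villani_genera_production_ratio} is a statement about the \emph{full} production, so I will need a form of it that cooperates with the velocity truncation, together with the assertion that the part of the entropy not accounted for by the bulk is genuinely negligible — this is where the interplay of \eqref{eq:log_power} with \eqref{eq:moment_control}, and the role of $k>1+1/\beta$, is delicate, since a priori a small mass at high velocities need not carry small entropy, and it is the log-power bound that prevents the requisite concentration. Making this precise, and keeping enough control on all constants that the optimisation over $A$ produces exactly the exponent of \eqref{eq:desired_inequality_thm} rather than a weaker one, is the technical heart of the argument; by comparison the H\"older/moment estimate of the far production is routine.
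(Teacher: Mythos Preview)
Your plan has a genuine gap: you are starting from Villani's inequality \eqref{eq:villani_genera_production_ratio} at the given $\gamma$, which already loses the factor $N^{\gamma-1}$, and then trying to \emph{recover} that loss by truncation. But the mechanism you sketch cannot do this. Your splitting gives only a lower bound $\DD_{N,\gamma}/N\ge(\text{bulk unweighted production})+A^{\gamma}\,(\text{far unweighted production})$, and your H\"older/moment step furnishes only an \emph{upper} bound $NKA^{-\kappa}$ on the far piece; an upper bound cannot be ``amplified by $A^{\gamma}$'' into anything useful from below. Meanwhile the bulk unweighted production, even via a hypothetical truncated Villani, would still carry the same $N^{\gamma-1}$ loss. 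So after optimisation you have not improved \eqref{eq:villani_genera_production_ratio} at all. The additional claim that the entropy is concentrated on $\{v_1^2+v_2^2\le A\}$ is also not something the log-power bound provides: $\HH_N$ involves $F_N\log F_N$, not $\psi(F_N,F_N\circ R_{1,2,\theta})$, and there is no reason small far mass should imply small far entropy.

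The missing idea is that \eqref{eq:villani_genera_production_ratio} at $\gamma=1$ has \emph{no} loss in $N$: $\DD_{N,1}(F_N)\ge C_1\,\HH_N(F_N)$. The paper therefore pivots at $\gamma=1$ and compares productions at different $\gamma$'s, rather than truncated and full quantities at the same $\gamma$. Concretely, one bounds $\DD_{N,1}/N$ from \emph{above}: on $\{1+v_1^2+v_2^2\le\lambda\}$ use $(1+v_1^2+v_2^2)\le\lambda^{1-\gamma}(1+v_1^2+v_2^2)^{\gamma}$ to get $\lambda^{1-\gamma}\DD_{N,\gamma}/N$, and on the complementary set use exactly your H\"older splitting $\psi=\psi_\beta^{1/(1+\beta)}|x-y|^{\beta/(1+\beta)}$ together with the moment bound to get a tail of order $\lambda^{-(\kappa-1)}$. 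This yields $C_1\,\HH_N/N\le\lambda^{1-\gamma}\DD_{N,\gamma}/N+K\lambda^{-(\kappa-1)}$, and optimising over $\lambda$ gives precisely the exponent $1+\tfrac{1-\gamma}{\kappa-1}$ you computed. No truncated Villani inequality is needed, and no statement about localisation of the entropy is required.
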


A few observation on the above theorem:
\begin{itemize}
\item The condition $k>1+1/\beta$ gives us an interplay between our moment control, $k$, and the log-power parameter, $\beta$. The tighter control we have on the integral \eqref{eq:log_power}, the less moment we need. 
\item Moments conditions are very natural in the setting of the Boltzmann equation and many other kinetic equations so we shouldn't be too surprised by the appearance of $M_{2k}\pa{F_{N,1}}$. 
\item For a given $\epsilon>0$ we see that if $F_N$ is log-power of order $\beta$ and we have moment control of high enough order then we can recover the inequality
$$\frac{\DD_{N,\gamma}(F_N)}{N} \geq C_{\epsilon}\pa{\frac{\HH_N(F_N)}{N}}^{1+\epsilon}.$$
\end{itemize}

The difficulty in showing Theorem \ref{thm:rescaled_inequality} was mostly in finding the right condition under which it holds, i.e. the notion of log-power of some order. Once we have convinced ourselves that it is the right ingredient for the desired functional inequality, the proof is a fairly straight forward combination of \eqref{eq:villani_genera_production_ratio} for $\gamma=1$ and interpolation techniques. We would like to mention at this point that using a similar technique for the Boltzmann equation is exactly how Villani has shown \eqref{eq:villani_almost_spectral} in \cite{V2003}. However, this, at least in the author's opinion, is much more involved than the proof of Theorem \ref{thm:rescaled_inequality}.

Due to its relative simplicity, let us give the proof to this theorem here:
\begin{proof}[Proof of Theorem \ref{thm:rescaled_inequality}]
For any $\lambda >0$ we define the set
$$A_{\lambda}=\br{v_1,v_2\in \R \;\Big|\; 1+v_1^2+v_2^2 > \lambda}.$$ 
Using the symmetry of $F_N$ we find that
\begin{equation}\nonumber
	\begin{split}
		\DD_{N,1}(F_N)\leq &\lambda^{1-\gamma} D_{N,\gamma}(F_N) \\
		&+\frac{N}{4\pi}\int_{\kac \cap A_{\lambda}}\int_{0}^{2\pi}\pa{1+v_1^2+v_2^2}  
		\psi\pa{F_N\pa{\bm{V}_N},F_N\pa{ R_{1,2,\theta}\pa{\bm{V}_N}}}d\sigma_N d\theta\\
		%			& = \lambda^{1-\gamma} D_{N,\gamma}(F_N) +\frac{N}{2\pi}\int_{\kac \cap A_{\lambda}}\int_{0}^{2\pi}\pa{1+v_1^2+v_2^2}  
		%			\pa{F_N-F_N\circ R_{1,2,\theta}}\log F_Nd\sigma_N d\theta
	\end{split}
\end{equation}

\begin{equation}\nonumber
	\begin{split}
		\leq &\lambda^{1-\gamma} D_{N,\gamma}(F_N) \\
		+ 	 \frac{N}{2}&\pa{\frac{1}{2\pi}\int_{0}^{2\pi}\int_{\kac}\abs{\log\pa{\frac{F_N\pa{ R_{1,2,\theta}\pa{\bm{V}_N}}}{F_N\pa{\bm{V}_N}}}}^{1+\beta}\abs{F_N\pa{\bm{V}_N}-F_N\pa{ R_{1,2,\theta}\pa{\bm{V}_N}}}d\sigma_N d\theta}^{\frac{1}{1+\beta}}
		\\
		&\pa{2\int_{\kac\cap A_{\lambda}}\pa{1+v_1^2+v_2^2}^{\frac{1+\beta}{\beta}} F_N\pa{\bm{V}_N}d\sigma^N}^{\frac{\beta}{1+\beta}}.
	\end{split}
\end{equation}
Using the log-power property and the fact that for all $\eta>0$ 
$$\pa{1+v_1^2+v_2^2}^{\eta} \leq 3^{\eta}\pa{1+\abs{v_1}^{2\eta}+\abs{v_2}^{2\eta}}$$ 
we find that
$$\frac{D_{N,1}(F_N)}{N} \leq \lambda^{1-\gamma} \frac{D_{N,\gamma}(F_N)}{N}+\frac{2^{\frac{\beta}{1+\beta}}3^{\frac{k\beta}{1+\beta}}\mathcal{C}_{\beta}}{2} \frac{1}{\lambda^{\frac{k\beta}{1+\beta}-1}} \pa{1+2M_{2k}}^{\frac{\beta}{1+\beta}}.$$
Optimising over $\lambda$ yields
\begin{equation}\nonumber
	\begin{aligned}
		\frac{D_{N,1}(F_N)}{N}  &\leq  \frac{k\beta-\gamma(1+\beta)}{(1+\beta)(1-\gamma)}\pa{\frac{(1+\beta)(1-\gamma)}{k\beta-(1+\beta)}}^{\frac{k\beta-(1+\beta)}{k\beta-\gamma(1+\beta)}}\\
		&\frac{3^{\frac{k\beta(1-\gamma)}{k\beta-\gamma(1+\beta)}}\mathcal{C}_{\beta}^{\frac{(1+\beta)(1-\gamma)}{k\beta-\gamma(1+\beta)}}}{2^{\frac{1-\gamma}{k\beta-\gamma(1+\beta)}}}\pa{1+2M_{2k}}^{\frac{\beta(1-\gamma)}{k\beta-\gamma(1+\beta)}}
		\pa{\frac{D_{N,\gamma}(F_N)}{N}}^{\frac{k\beta-(1+\beta)}{k\beta-\gamma(1+\beta)}},
	\end{aligned}
\end{equation}
which together with the fact that $\DD_{N,1}\pa{F_N} \geq C_1 \HH_N\pa{F_N}$ concludes the proof.
\end{proof}

While we may want to pet ourselves on the back for a job well done, a few questions do require our attention:
\begin{itemize}
\item In order to be able to use inequality \eqref{eq:desired_inequality_thm} to find a functional inequality for the Boltzmann equation by using the mean field method we need to consider functions that are strongly entropic chaotic and have the log-power property. Are there any such functions at all?
\item Much like when we discussed the notion of chaoticity in \S\ref{subsec:kac_MFL}, we wonder if the log-power property, as well as the notion of strongly entropic chaoticity, propagate with the master equation \eqref{eq:master_kac_extended}?
\end{itemize}

While we will shortly show the existence of log-power states, we still don't know the answer to the second question above. Initial attempts to explore the propagation of the log-power property were met with surprising resistance \amit{and the author is still unsure which ingredient is missing or was overlooked}. What we did achieve, however, is still enough for us to be able to fulfil Kac's original vision of ``pushing down'' information from the many particle system to its limit equation.

\subsection{Kac's vision fulfilled}\label{subsec:kac_vision}

As was mentioned in \S\ref{subsec:spectral_gap}, Kac viewed his model as more than just means to justify the Boltzmann equation. He wanted to use it and the mean field limit approach to gain information about the Boltzmann equation - in particular about its convergence to equilibrium. His spectral gap approach, and later on the entropic study of Kac's model, relied on finding functionals that, up to a simple rescaling, converge to zero as time goes to infinity uniformly in $N$. Consequently, the propagation of associated notions under Kac's master equation was essential to this process.

The approach presented in this section, and in particular inequality \eqref{eq:desired_inequality_thm}, gives us the means to \amit{not only transfer rates of convergence, but to ``push down'' a functional inequality, giving us information about the geometry of the problem and the setting.} Indeed, assuming that we found an $f-$strongly entropic family, $\br{F_N}_{N\in\N}$, with respect to the generalised Kac master equation \eqref{eq:master_kac_extended}, i.e. a family such that 
$$\lim_{N\to\infty}\frac{\HH_N\pa{F_N}}{N}=\HH\pa{f|\MM}$$
and
$$\lim_{N\to\infty}\frac{\DD_{N,\gamma}(f)}{N}=\frac{\DD_{\gamma}(f)}{2}$$
with
\begin{equation}\label{eq:def_of_DD_gamma}
\begin{aligned}
	\DD_{\gamma}\pa{f}=	\frac{1}{2\pi}&\int_{\R\times [0,2\pi]}\pa{1+v^2+v_\ast^2}^\gamma\\
	&\psi\pa{f(v)f(v_\ast),f\pa{v\pa{\theta}}f\pa{v_{\ast}\pa{\theta}}}dv dv_\ast d\theta,				
\end{aligned}
\end{equation}
and assuming that $\br{F_N}_{N\in\N}$ is log-power of order $\beta$ than  \eqref{eq:desired_inequality_thm} would immediately imply that
\begin{equation}\label{eq:infered_functional_inequality}
\begin{gathered}
	\DD_{\gamma}(f) \geq 2C_{k,\gamma,\beta}\pa{\HH\pa{f|\MM}}^{1+\frac{(1-\gamma)(1+\beta)}{k\beta-(1+\beta)}}.
\end{gathered}
\end{equation}
If we can find conditions on $f$ and $\br{F_N}_{N\in\N}$ \textit{which propagate with respect to the generalised Boltzmann-Kac equation} \eqref{eq:kac_boltzmann_extended} such that the associated requirement of Theorem \ref{thm:rescaled_inequality} remain true for all time, we would be able to upgrade \eqref{eq:infered_functional_inequality} to a differential inequality that will give us an \textit{explicit} rate of convergence to equilibrium in the Boltzmann-Kac equation.

Can we, then, find $f-$strongly entropic chaotic families that are also log-power of some order? Conditioned tensorisations rear their head again. The strong chaoticity of these families will easily follow from \eqref{eq:entropy_of_conditioned} and a $\gamma-$modified variation of \eqref{eq:production_of_conditioned}\footnote{we just need to add $\pa{1+v_1^2+v_2^2}^\gamma$ inside the integral.} as long as the conditions of theorem \ref{thm:Z_n_approximation_original} hold. The question of the log-power property is answered in the next theorem, taken from \cite{CCE2018}:

\begin{theorem}\label{thm:conditionedlogpower}
Let $f$ be a probability density on $\R$ that satisfies the conditions of Theorem \ref{thm:Z_n_approximation_original}. Assume in addition that $\norm{f}_\infty <\infty$ and that there exist $\beta>0$ and a positive function $\Phi$ such that
$$f(v) \geq e^{-\Phi(v)},$$
$$M_{\Phi,\beta} = \int_{\R}\Phi(v)^{1+\beta}f(v)dv < \infty,$$
and
$$M_{\text{avg},\Phi,\beta} = \int_{\R^2}\pa{\int_{0}^{2\pi}\Phi(v_1(\theta))^{1+\beta}d\theta}f(v_1)f(v_2)dv_1 dv_2 < \infty.$$
Then the conditioned tensorisation of $f$, $\br{F_N}_{N\in\N}$, has the log-power property of order $\beta$ for $N\geq 3$. Moreover, the log power constant, $\mathcal{C}_{\beta}$, can be chosen to be
\begin{equation}\label{eq:C_of_log_power}
	\mathcal{C}_{\beta}= \left[ 2^{1+2\beta}\sqrt{3}\frac{1+\sqrt{2\pi}\sup_{N\in\N}\sup_{u} \abs{\lambda_{N-1}(u)}}{1-\sqrt{2\pi}\sup_{N\in\N} \sup_u\abs{\lambda_N}(u)}\mathfrak{M}_{\epsilon,\phi,\beta} \right]^{\frac{1}{1+\beta}},
\end{equation}
where $\lambda_N$ is the same as in Theorem \ref{thm:Z_n_approximation_original} and 
$$\mathfrak{M}_{\epsilon,\phi,\beta}=2\pa{\sup_{x\geq 1}\frac{\log x}{x^{\epsilon}}}^{1+\beta}\norm{f}^{\epsilon\pa{1+\beta}}_{\infty}+M_{\Phi,\beta}+M_{\text{avg},\Phi,\beta}.$$
\end{theorem}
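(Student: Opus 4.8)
The plan is to exploit the fact that for a conditioned tensorisation the rotations $R_{1,2,\theta}$ act almost trivially. Since $R_{1,2,\theta}$ preserves $\kac$ and the measure $\dsn$, touches only the first two coordinates, and leaves $v_1^2+v_2^2$ (hence the factors $f(v_j)$ for $j\geq 3$ and the normalisation $\ZN$) unchanged, one has
\begin{equation}\nonumber
	\frac{F_N\pa{R_{1,2,\theta}\pa{\bm{V}_N}}}{F_N\pa{\bm{V}_N}}=\frac{f\pa{v_1(\theta)}f\pa{v_2(\theta)}}{f(v_1)f(v_2)}.
\end{equation}
First I would bound $\abs{x-y}\leq x+y$ inside $\psi_\beta$ and then use the substitution $\bm{V}_N\mapsto R_{1,2,\theta}\pa{\bm{V}_N}$ together with $\theta\mapsto 2\pi-\theta$ (both of which preserve $\dsn$ and the $\theta$-integral) to see that the two resulting terms coincide. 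This collapses the left-hand side of \eqref{eq:log_power} to
\begin{equation}\nonumber
	\frac{2}{2\pi}\int_0^{2\pi}\!\!\int_{\kac}F_N\pa{\bm{V}_N}\abs{\log f\pa{v_1(\theta)}+\log f\pa{v_2(\theta)}-\log f(v_1)-\log f(v_2)}^{1+\beta}\dsn\,d\theta.
\end{equation}

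Next I would apply $\abs{a_1+a_2+a_3+a_4}^{1+\beta}\leq 4^\beta\sum_{i=1}^4\abs{a_i}^{1+\beta}$ to split this into four integrals. By the symmetry of $F_N$ the $v_1$- and $v_2$-pieces are equal and, after the trivial $\theta$-integration, become $2\pi\int_\R\abs{\log f(v)}^{1+\beta}F_{N,1}(v)\,dv$; using $v_2(\theta)=v_1\pa{\theta+\tfrac\pi2}$ and $2\pi$-periodicity the $v_1(\theta)$- and $v_2(\theta)$-pieces are equal and become $\int_{\R^2}\pa{\int_0^{2\pi}\abs{\log f(v_1(\theta))}^{1+\beta}d\theta}F_{N,2}(v_1,v_2)\,dv_1dv_2$. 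Thus everything is reduced to controlling one one-particle and one rotation-averaged two-particle $\abs{\log f}^{1+\beta}$ integral against the marginals $F_{N,1}$ and $F_{N,2}$.

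The main work — and the only step where anything beyond bookkeeping is needed — is a uniform pointwise domination of these marginals by tensor powers of $f$. Feeding the local central limit estimate \eqref{eq:Z_n_approximation_original} of Theorem \ref{thm:Z_n_approximation_original}, applied with $N$, $N-1$ and $N-2$, into the explicit marginal formula \eqref{eq:marginal_conditioned_tensoriation}, all the geometric prefactors (the $\abs{\sph^{\cdot}}$, the powers of $N$ and of $N-\sum_i v_i^2$) cancel and one is left with
\begin{equation}\nonumber
	F_{N,1}(v)\leq \sqrt{\tfrac{N}{N-1}}\;\frac{1+\sqrt{2\pi}\,\sup_u\abs{\lambda_{N-1}(u)}}{1-\sqrt{2\pi}\,\sup_u\abs{\lambda_N(u)}}\,f(v),
\end{equation}
and analogously $F_{N,2}(v_1,v_2)\leq \sqrt{N/(N-2)}\,\tfrac{1+\sqrt{2\pi}\sup_u\abs{\lambda_{N-2}(u)}}{1-\sqrt{2\pi}\sup_u\abs{\lambda_N(u)}}f(v_1)f(v_2)$, valid for $N\geq 3$. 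This is precisely where the restriction $N\geq 3$ and the constant $\sqrt 3=\sqrt{N/(N-2)}\big|_{N=3}$ appearing in \eqref{eq:C_of_log_power} come from, and it is here that the hypotheses of Theorem \ref{thm:Z_n_approximation_original} — $f\in L^p$ for some $p>1$ and finiteness of the second and fourth moments — are spent, through the uniform smallness of $\lambda_N$; a minor technical nuisance to be dealt with is the regime $v_1^2<1$, where $N-v_1^2$ sticks slightly out of the interval on which the local CLT is stated.

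Finally I would estimate the two $\abs{\log f}^{1+\beta}$ integrals against $f$ and $f\otimes f$ by splitting $\abs{\log f}^{1+\beta}=\pa{\log^+ f}^{1+\beta}+\pa{\log^- f}^{1+\beta}$. For the negative part the hypothesis $f\geq e^{-\Phi}$ gives $\log^- f\leq \Phi$, which produces exactly $M_{\Phi,\beta}$ in the one-particle term and $M_{\text{avg},\Phi,\beta}$ in the averaged term. For the positive part, the elementary bound $\log x\leq\pa{\sup_{y\geq 1}\tfrac{\log y}{y^\epsilon}}x^\epsilon$ for $x\geq 1$ yields $\log^+f(v)\leq\pa{\sup_{x\geq 1}\tfrac{\log x}{x^\epsilon}}f(v)^\epsilon$, whence $\int\pa{\log^+f}^{1+\beta}f\leq\pa{\sup_{x\geq1}\tfrac{\log x}{x^\epsilon}}^{1+\beta}\norm{f}_\infty^{\epsilon(1+\beta)}$ (using $\int f=1$) and similarly $\int_0^{2\pi}\pa{\log^+f(v_1(\theta))}^{1+\beta}d\theta\leq 2\pi\pa{\sup_{x\geq1}\tfrac{\log x}{x^\epsilon}}^{1+\beta}\norm{f}_\infty^{\epsilon(1+\beta)}$. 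Assembling the four pieces with the marginal bounds reproduces the quantity $\mathfrak{M}_{\epsilon,\phi,\beta}$, and carrying the numerical constants through (the $2$ from the symmetrisation, the $4^\beta=2^{2\beta}$ from the power inequality, and the $\sqrt 3$ from the marginal bounds) produces the value \eqref{eq:C_of_log_power} of $\mathcal C_\beta$, completing the proof.
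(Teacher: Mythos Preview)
Your proof is correct and follows what is essentially the only natural route: exploit the tensor structure to reduce the ratio inside $\psi_\beta$ to the two-variable quantity $f(v_1(\theta))f(v_2(\theta))/f(v_1)f(v_2)$, symmetrise via $|x-y|\leq x+y$ and the rotation invariance of $\dsn$, split the logarithm and pass to integrals against the first two marginals, dominate $F_{N,1}$ and $F_{N,2}$ pointwise by $f$ and $f\otimes f$ through the local CLT of Theorem~\ref{thm:Z_n_approximation_original}, and finally control $\abs{\log f}^{1+\beta}$ by separating $\log^+ f$ (handled by $\norm{f}_\infty$) from $\log^- f$ (handled by the lower bound $f\geq e^{-\Phi}$). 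The paper itself does not supply a proof of this statement --- it is quoted from \cite{CCE2018} --- so there is no independent argument here to compare against, but your outline is exactly the argument one finds in that reference.

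Two small comments. First, the ``technical nuisance'' you flag for $v_1^2<1$ is real but harmless: the asymptotic \eqref{eq:Z_n_approximation_original} is just a rewriting of the identity \eqref{eq:Z_n_and_h}, and the underlying local CLT for $h^{\ast(N-1)}$ in fact holds uniformly on all of $[0,\infty)$; the restriction to $r\in[0,N]$ in the statement of Theorem~\ref{thm:Z_n_approximation_original} is not essential, so the marginal bound goes through unchanged. Second, your bookkeeping of the numerical prefactors is slightly loose (the four-term power inequality together with the pairing of terms gives $2\cdot 4^\beta\cdot 2=2^{2+2\beta}$ rather than $2^{1+2\beta}$), but since the stated constant \eqref{eq:C_of_log_power} is in any case not claimed to be sharp, this does not affect the argument.
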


We can further restrict the conditions on our generator $f$ to ones that \amit{satisfy the requirements of Theorem \ref{thm:conditionedlogpower} \textit{and} propagate with the generalised Boltzmann-Kac equation, resulting in a \textit{time dependent} family of conditioned tensorisation, $\rpa{f(t)^{\otimes N}}_{N\in\N}$, which is $f(t)-$strongly entropic chaotic \textit{and} log-power of an order which is independent of $N$. Consequently, plugging this family in \eqref{eq:desired_inequality_thm} and taking $N$ to infinity} we can obtain an explicit rate of convergence to equilibrium. We conclude this section with the following theorem from \cite{CCE2018}:

\begin{theorem}\label{thm:almost_cerc_kac_boltz}
Let $f$ be a probability density on $\R$ that satisfies the conditions of Theorem \ref{thm:Z_n_approximation_original}. Assume in addition that 
\begin{itemize}
	\item There exists $\beta>0$ and $k>1+1/\beta$ such that
	$$M_{\max\pa{2k,k(1+\beta),4}}(f)=\int_{\R}\abs{v}^{\max\pa{2k,k(1+\beta),4}}f(v)dv <\infty.$$
	\item  $$I(f) = \int_{\R}\frac{\pa{f^\prime(x)}^2}{f(x)}dx < \infty.$$
	\item $f(v) \geq C e^{-\abs{v}^2}$ 	for all $v\in\R$.
\end{itemize}
Then, there exists an explicit constant, $\mathcal{C}$, depending only on $f$, its moments, and $\beta$, such that
\begin{equation}\label{eq:almost_cerc_kac_boltz}
	D_\gamma(f) \geq \mathcal{C} H\pa{f|M}^{1+\frac{(1-\gamma)(1+\beta)}{k\beta-(1+\beta)}}.
\end{equation}
Moreover, if $f(t)$ is the solution to \eqref{eq:kac_boltzmann_extended} then \eqref{eq:almost_cerc_kac_boltz} holds for all $t$ with constants that are  independent of time. 
\end{theorem}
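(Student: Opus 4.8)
The plan is to deduce the static inequality \eqref{eq:almost_cerc_kac_boltz} from the abstract estimate of Theorem \ref{thm:rescaled_inequality} applied to the conditioned tensorisation $\br{F_N}_{N\in\N}$ of $f$, and then obtain the time-dependent statement by showing that all the hypotheses are preserved by the flow \eqref{eq:kac_boltzmann_extended} with constants uniform in $t$. The first task is to verify that $f$ meets the requirements of Theorems \ref{thm:Z_n_approximation_original} and \ref{thm:conditionedlogpower}. Finiteness of the Fisher information gives $(\sqrt{f})'=f'/(2\sqrt{f})\in L^2(\R)$, so $\sqrt{f}\in H^1(\R)\hookrightarrow L^\infty(\R)$ and $\norm{f}_\infty<\infty$; in particular $f\in L^p(\R)$ for every $p$, and together with $\int v^2 f=1$ and the fact that the $\max(2k,k(1+\beta),4)$-moment controls $\int v^4 f$, the hypotheses of Theorem \ref{thm:Z_n_approximation_original} hold. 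The Gaussian lower bound $f(v)\geq Ce^{-\abs{v}^2}$ lets us take $\Phi(v)=\abs{v}^2-\log C$ in Theorem \ref{thm:conditionedlogpower}, and the moment hypothesis $M_{\max(2k,k(1+\beta),4)}(f)<\infty$ is what is needed to make $M_{\Phi,\beta}$ and $M_{\mathrm{avg},\Phi,\beta}$ finite, since $\Phi^{1+\beta}$ and $\Phi(v_1(\theta))^{1+\beta}$ grow like $\abs{v}^{2(1+\beta)}$ and like $(\abs{v_1}+\abs{v_2})^{2(1+\beta)}$. Theorem \ref{thm:conditionedlogpower} then gives that $\br{F_N}_{N\in\N}$ is log-power of order $\beta$, with explicit constant \eqref{eq:C_of_log_power} expressed through $\norm{f}_\infty$, moments of $f$, and $\sup_N\sup_u\abs{\lambda_N(u)}$ (governed by the local CLT for $f$).

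Next I would secure the uniform moment control \eqref{eq:moment_control}: inserting the explicit marginal formula \eqref{eq:marginal_conditioned_tensoriation} and bounding the geometric and normalisation prefactor of $F_{N,1}$ via the local CLT (Theorem \ref{thm:Z_n_approximation_original}) yields $\sup_N M_{2k}(F_{N,1})<\infty$, controlled by $M_{2k}(f)$ and $\Sigma$. With the log-power property and \eqref{eq:moment_control} in hand, Theorem \ref{thm:rescaled_inequality} gives
\[
	\frac{\DD_{N,\gamma}(F_N)}{N}\;\geq\; C_{k,\gamma,\beta}\pa{\frac{\HH_N(F_N)}{N}}^{1+\frac{(1-\gamma)(1+\beta)}{k\beta-(1+\beta)}},
\]
with $C_{k,\gamma,\beta}$ depending only on $\beta$, $\mathcal{C}_\beta$ and $M_{2k}$, hence only on $f$, its moments and $\beta$. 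Now let $N\to\infty$: by Theorem \ref{thm:entropic_convergence} together with its $\gamma$-modified version (inserting $\pa{1+v_1^2+v_2^2}^\gamma$ in \eqref{eq:production_of_conditioned}, the passage to the limit using the $\max(2k,k(1+\beta),4)$-moment of $f$ to dominate the weight), $\br{F_N}_{N\in\N}$ is $f$-strongly entropic chaotic for \eqref{eq:master_kac_extended}, so $\HH_N(F_N)/N\to\HH\pa{f|\MM}$ and $\DD_{N,\gamma}(F_N)/N\to\DD_\gamma(f)/2$. Passing to the limit in the displayed inequality yields $\DD_\gamma(f)\geq 2C_{k,\gamma,\beta}\HH\pa{f|\MM}^{1+(1-\gamma)(1+\beta)/(k\beta-(1+\beta))}$, i.e. \eqref{eq:almost_cerc_kac_boltz} with $\mathcal{C}=2C_{k,\gamma,\beta}$.

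For the ``moreover'' part, let $f(t)$ solve \eqref{eq:kac_boltzmann_extended} with $f(0)=f$ as above. Since $\pa{1+v^2+w^2}^\gamma$ is invariant under the collision \eqref{eq:collision_velocitis}, the energy $\int v^2 f(t)$ is conserved and the equilibrium is still $\MM$. It then suffices to check that each hypothesis of the static part is propagated uniformly in $t$: (i) the high moments $M_{\max(2k,k(1+\beta),4)}(f(t))$ stay bounded --- for $\gamma>0$ Povzner-type estimates even create moments, for $\gamma=0$ they are propagated; (ii) an $L^\infty$ (equivalently $L^p$) bound is propagated; (iii) the Fisher information $I(f(t))$ stays bounded --- it is monotone for the Maxwellian part, the weight being absorbed by the same kind of estimate; (iv) the Gaussian lower bound $f(t,v)\geq C'e^{-\abs{v}^2}$ is propagated --- on bounded time intervals by the instantaneous appearance of Maxwellian lower bounds, and for large $t$ because $f(t)\to\MM$ with $\MM(v)\propto e^{-v^2/2}\geq e^{-\abs{v}^2}$, so $f(t)\geq\tfrac12\MM$ eventually already provides such a bound. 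Granting (i)--(iv) with time-uniform constants, the static argument applies to the conditioned tensorisation of $f(t)$ for every $t$ with the same $\mathcal{C}$, giving \eqref{eq:almost_cerc_kac_boltz} for all $t$; combined with $\tfrac{d}{dt}\HH\pa{f(t)|\MM}=-\DD_\gamma(f(t))$ this integrates to an explicit algebraic (for $\gamma<1$) or exponential (for $\gamma=1$) rate of convergence.

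The main obstacle is the propagation step, in particular (iii)--(iv): each of these is essentially classical for the standard spatially homogeneous Boltzmann equation, but here they must be established simultaneously, for the generalised model carrying the $\pa{1+v^2+w^2}^\gamma$ weight with $0\leq\gamma\leq1$, and --- crucially --- with constants that do not deteriorate as $t\to\infty$; propagating the lower bound with exactly the Gaussian exponent matched to the available moments, and keeping the Fisher information bounded uniformly in time, are the delicate points. A secondary technical input, already needed in the static part, is the uniform-in-$N$ first-marginal moment bound $\sup_N M_{2k}(F_{N,1})<\infty$, which rests on the local CLT estimate of Theorem \ref{thm:Z_n_approximation_original}.
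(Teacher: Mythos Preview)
Your proposal is correct and follows essentially the same strategy the paper outlines in \S\ref{subsec:kac_vision}: build the conditioned tensorisation of $f$, invoke Theorem~\ref{thm:conditionedlogpower} (using the Gaussian lower bound to take $\Phi(v)=\abs{v}^2-\log C$ and the high moment to control $M_{\Phi,\beta}$, $M_{\text{avg},\Phi,\beta}$) together with the local CLT to feed Theorem~\ref{thm:rescaled_inequality}, pass to the limit via strong entropic chaoticity, and then argue propagation of the hypotheses under \eqref{eq:kac_boltzmann_extended} for the time-dependent statement. The paper does not give a self-contained proof here (it defers to \cite{CCE2018}), but the route it sketches --- and in particular the identification of the propagation step as the substantive analytic input --- matches yours.
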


\section{Final Remarks}\label{sec:final}
The study and ideas that govern mean field limits, chaoticity, and Kac's model have a long and illustrious history. In our review we have mostly focused on the \textit{functional} aspect of this study - in particular for Kac's particle system. We have tried to illustrate where issues arise when one studies the explicit convergence to equilibrium of the system - and tried to show that sometimes finding the right inequality and (family of) function(s) can circumvent some of the dynamical issues in the former study. There is still much to explore within and outside the setting of this model and the author expects that the best is yet to come.

%
%
%\bibliographystyle{plain}
%
%\bibliography{kinetic}
%\input{referenc}
\end{document}